\pdfoutput=1
\documentclass[11pt]{article}

\usepackage[utf8]{inputenc}
\usepackage[pdftex]{graphicx,xcolor}
\usepackage{hyperref}
\hypersetup{pdftex,colorlinks=true,allcolors=blue}
% for arXiv's outdated hyperref
\makeatletter
\@ifpackagelater{hyperref}{2012/07/24}{
\hypersetup{unicode,pdftex,colorlinks=true,allcolors=blue,psdextra}
}{
\hypersetup{unicode,pdftex,colorlinks=true,allcolors=blue}
}
\makeatother
\usepackage{array,bookmark,fullpage}
\usepackage[nottoc]{tocbibind}
\usepackage{amsmath,amssymb,amsthm,mathtools,bm,tikz-cd,stmaryrd}
\usepackage[capitalize]{cleveref}

\usepackage{mymacros}

\let\defn\textbf

\def\op{\mathrm{op}}
\mathlig{<-}{\leftarrow}
\DeclareMathOperator\Hom{Hom}

\begin{document}

\title{Amalgamable diagram shapes}
\author{Ruiyuan Chen\thanks{Research partially supported by NSERC PGS D}}
\date{}
\maketitle

\begin{abstract}
A category has the amalgamation property (AP) if every pushout diagram has a cocone, and the joint embedding property (JEP) if every finite coproduct diagram has a cocone.  %We consider the question: over which diagrams of more complicated shapes may cocones be built by repeated use of the AP and the JEP?
We show that for a finitely generated category $\*I$, the following are equivalent: (i) every $\*I$-shaped diagram in a category with the AP and the JEP has a cocone; (ii) every $\*I$-shaped diagram in the category of sets and injections has a cocone; (iii) a certain canonically defined category $\@L(\*I)$ of ``paths'' in $\*I$ has only idempotent endomorphisms.  When $\*I$ is a finite poset, these are further equivalent to: (iv) every upward-closed subset of $\*I$ is simply-connected; (v) $\*I$ can be built inductively via some simple rules.  Our proof also shows that these conditions are decidable for finite $\*I$.
\end{abstract}

\section{Introduction}
\label{sec:intro}

This paper concerns a category-theoretic question which arose in a model-theoretic context.  In model theory, specifically in the construction of Fraïssé limits (see \cite[\S7.1]{H}), one considers a class $\@K$ of structures (in some first-order language) with the following properties:
\begin{itemize}
\item  the \defn{joint embedding property (JEP)}: ($\@K$ is nonempty and) for every two structures $A, B \in \@K$, there are embeddings $f : A -> X$ and $g : B -> X$ into some $X \in \@K$:
\begin{equation*}
\begin{tikzcd}[row sep=1.5em]
& X \\
A \urar[dashed]{f} && B \ular[dashed][swap]{g}
\end{tikzcd};
\end{equation*}
\item  the \defn{amalgamation property (AP)}: every diagram of embeddings
\begin{equation*}
\begin{tikzcd}[row sep=1.5em]
B && C \\
& A \ular{f} \urar[swap]{g}
\end{tikzcd}
\end{equation*}
between structures $A, B, C \in \@K$ can be completed into a commutative diagram
\begin{equation*}
\begin{tikzcd}[row sep=1.5em]
& X \\
B \urar[dashed]{h} && C \ular[dashed][swap]{k} \\
& A \ular{f} \urar[swap]{g}
\end{tikzcd}
\end{equation*}
for some structure $X \in \@K$ and embeddings $h : B -> X$ and $k : C -> X$.
\end{itemize}
Common examples of classes $\@K$ with these properties include: finitely generated groups; posets; nontrivial Boolean algebras; finite fields of fixed characteristic $p$.

From the AP (and optionally the JEP), one has various ``generalized amalgamation properties'', whereby more complicated diagrams (of embeddings) can be completed into commutative diagrams (of embeddings), e.g., the following diagram by two uses of AP:
\begin{equation*}
\begin{tikzcd}[row sep=1.5em]
&& Y \\
& X \urar[dashed] \\
A \urar[dashed] && C \ular[dashed] && E \ar[dashed]{uull} \\
& B \ular \urar && D \ular \urar
\end{tikzcd}.
\end{equation*}
However, the following diagram cannot be completed using just the AP (and the JEP):
\begin{equation}
\label{eqn:bowtie}
\begin{tikzcd}[column sep=1em]
A & ~ & B \\
C \uar{f} \ar{urr}[pos=.8]{h} && D \ar[crossing over]{ull}[pos=.8,swap]{g} \uar[swap]{k}
\end{tikzcd}.
\end{equation}
For example, take $\@K =$ the class of finite sets, $A = C = D = 1$, $B = 2$, and $h \ne k$.  This leads to the following
\begin{definitionnamed}{Question}
Can we characterize the shapes of the diagrams which can always be completed using the AP, i.e., the ``generalized amalgamation properties'' which are implied by the AP?  If so, is such a characterization decidable?
\end{definitionnamed}

This question concerns only abstract properties of diagrams and arrows, hence is naturally phrased in the language of category theory.  Let $\*C$ be a category.  Recall that a \defn{cocone} over a diagram in $\*C$ consists of an object $X \in \*C$, together with morphisms $f_A : A -> X$ in $\*C$ for each object $A$ in the diagram, such that the morphisms $f_A$ commute with the morphisms in the diagram; this is formally what it means to ``complete'' a diagram.  Recall also that a \defn{colimit} of a diagram is a universal cocone, i.e., one which admits a unique morphism to any other cocone.  (See \cref{sec:prelims} for the precise definitions.)

We say that $\*C$ has the AP if every pushout diagram (i.e., diagram of shape %$\begin{tikzcd}[column sep=1.5ex,row sep=1ex,every cell/.append style={inner sep=.5ex},above=-.5ex]
% \scriptstyle\bullet && \scriptstyle\bullet \\
% & \scriptstyle\bullet \ular \urar
% \end{tikzcd}$
$\bullet <- \bullet -> \bullet$) in $\*C$ has a cocone, and that $\*C$ has the JEP\footnote{We borrow this terminology from model theory, even when not assuming that morphisms are ``embeddings'' in any sense; in particular, we do not assume that every morphism in $\*C$ is monic (although see \cref{sec:general}).} if every diagram in $\*C$ consisting of finitely many objects (without any arrows) has a cocone.  When $\*C$ is the category of structures in a class $\@K$ and embeddings between them, this recovers the model-theoretic notions defined above.  Category-theoretic questions arising in Fraïssé theory have been considered previously in the literature; see e.g., \cite{K}, \cite{C}.

The possibility of answering the above question in the generality of an arbitrary category is suggested by an analogous result of Paré \cite{P} (see \cref{thm:pushout} below), which characterizes the diagram shapes over which a colimit may be built by pushouts (i.e., colimits of pushout diagrams).  There, the crucial condition is that the diagram shape must be \defn{simply-connected} (see \cref{defn:pi1}); failure to be simply-connected is witnessed by the \defn{fundamental groupoid} of the diagram shape, whose morphisms are ``paths up to homotopy''.  For example, the fundamental groupoid of the shape of \eqref{eqn:bowtie} is equivalent to $\#Z$, with generator given by the ``loop'' $A <- D -> B <- C -> A$.

However, simply-connectedness of a diagram's shape does not guarantee that a cocone over it may be built using only the AP (see \cref{ex:boat} below).  Intuitively, the discrepancy with Paré's result is because the universal property of a pushout allows it to be used in more ways to build further cocones.  Simply-connectedness nonetheless plays a role in the following characterization, which is the main result of this paper:

\begin{theorem}
\label{thm:amalgam}
Let $\*I$ be a finitely generated category.  The following are equivalent:
\begin{enumerate}
\item[(i)]  Every $\*I$-shaped diagram in a category with the AP and the JEP has a cocone.
\item[(ii)]  Every $\*I$-shaped diagram in the category of sets and injections has a cocone.  (When $\*I$ is finite, it suffices to consider finite sets.)
\item[(iii)]  $\*I$ is upward-simply-connected (see \cref{defn:L}).
\end{enumerate}
When $\*I$ is a finite poset, these are further equivalent to:
\begin{enumerate}
\item[(iv)]  Every upward-closed subset of $\*I$ is simply-connected.
\item[(v)]  $\*I$ is forest-like (see \cref{defn:tree}; this means $\*I$ is built via some simple inductive rules).%, i.e., can be inductively constructed by taking finite disjoint unions and by adjoining a single element below connected subsets of each connected component.
\end{enumerate}
Similarly, every $\*I$-shaped diagram in a category with the AP has a cocone, iff $\*I$ is connected and any/all of (ii), (iii) (also (iv), (v) if $\*I$ is a poset) hold.
\end{theorem}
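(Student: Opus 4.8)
The plan is to prove the biconditional by treating the two implications separately, leaning on the already-established equivalence of (i)--(iii) (and of (iv),(v) in the poset case).

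For the forward direction, suppose every $\*I$-shaped diagram in a category with the AP has a cocone. Applying this to the category of sets and injections---which has the AP, since the pushout in $\mathbf{Set}$ of a span of injections has injective legs and hence furnishes a cocone in the subcategory of injections---yields (ii). It remains to see that $\*I$ must be connected. If not, I would exhibit a category with the AP admitting an $\*I$-diagram with no cocone: take the two-object discrete category $\{a,b\}$ (which has the AP vacuously, as its only spans are identities), and send one connected component of $\*I$ constantly to $a$ and all remaining components constantly to $b$. A cocone would require a single object receiving maps from both $a$ and $b$, of which there are none; so no cocone exists, contradicting the hypothesis.

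For the converse, assume $\*I$ is connected and (ii) holds; by the main equivalence we may also assume (i) and upward-simple-connectedness (iii). Given any category $\*C$ with the AP and a diagram $D : \*I \to \*C$, I must build a cocone, and the strategy is to re-run the construction proving (iii)$\Rightarrow$(i), checking that connectedness lets us dispense with every appeal to the JEP. Concretely, I would process the objects of $\*I$ in an order refining a spanning tree of its underlying graph: each newly incorporated object $D(j)$ is linked by a generating morphism to the already-built partial cocone, so it can be attached by a single pushout (when the morphism points out of the processed part) or by mere composition (when it points in)---in both cases using only the AP. The appeals to the JEP in the original construction occur precisely when an object must be mapped into the partial cocone with no morphism connecting it to what has been built, and connectedness removes exactly this situation. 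The remaining, and genuinely delicate, steps---those that force the non-tree generating morphisms (the ``loops'') to commute---are handled by the AP exactly as before and succeed for the same reason, namely (iii); this part is insensitive to the presence or absence of the JEP.

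I expect the main obstacle to be the bookkeeping in this last step: one must verify that the processing order can be chosen to respect simultaneously the connectivity (spanning-tree) constraint and whatever ordering the (iii)-driven loop-forcing requires, and that no auxiliary joint embedding is smuggled into the middle of the construction. It is worth stressing why connectedness must be used \emph{inside} the construction rather than through a formal reduction: the naive shortcut of adjoining finite coproducts to $\*C$ to manufacture the JEP and then invoking (i) fails, since the free coproduct completion need not have the AP---a span between two-component families can present a bowtie-shaped subdiagram as in \eqref{eqn:bowtie}, which has no cocone. Finally, the poset clause requires no extra work: for a finite poset the equivalences (ii)$\Leftrightarrow$(iii)$\Leftrightarrow$(iv)$\Leftrightarrow$(v) are already in hand, so ``connected and (iv)/(v)'' is merely a restatement of ``connected and (ii).''
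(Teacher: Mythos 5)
Your forward direction is correct and matches the paper's: $\*{Inj}$ has the AP, and a discrete two-object category (the paper uses $\pi_0(\*I)$ itself) shows that disconnected $\*I$ fails the AP-only property. The genuine gap is in the converse. ``Re-run the construction proving (iii)$\Rightarrow$(i), checking that connectedness lets us dispense with every appeal to the JEP'' is not an argument: you treat (i) as a black box, so there is no construction available to re-run, and the spanning-tree sketch you substitute for it does not stand on its own. Attaching objects one at a time by amalgamation along tree edges is the easy part; the entire content of the theorem is forcing the non-tree morphisms to commute with the partial cocone, and saying this is ``handled by the AP exactly as before \dots\ for the same reason, namely (iii)'' begs the question, since no procedure using (iii) is ever specified. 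In the paper this step is genuinely global, not local: for finite posets it is an induction on the tree-like structure (\cref{thm:amalgam-tree}), and for general finitely generated $\*I$ it passes to the monic reflection $\@M(\*C)$ of $\*C$, uses the characterization of the monic congruence $f \sim g \iff \exists h.\, h \circ f = h \circ g$ valid in any category with the AP (\cref{thm:ap-moniq}), and then amalgamates the finitely many witnesses $h_i$ attached to a finite generating set of arrows. Note also that the paper's logical order is the reverse of yours: it proves the connected, AP-only statement first and then derives (i) from it by gluing cocones over the components of $\*I$ using the JEP.

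Moreover, your methodological claim that connectedness ``must be used inside the construction rather than through a formal reduction'' is false, even though your specific observation is right (the coproduct completion can contain a bowtie as in \eqref{eqn:bowtie}, hence need not inherit the AP). A different formal reduction does work and would have completed your converse in a few lines: given $F : \*I \to \*C$ with $\*C$ having the AP and $\*I$ connected, the image of $F$ lies in a single connected component $\*C_0 \subseteq \*C$; this full subcategory again has the AP (any amalgam of a span in $\*C_0$ is connected to it, hence lies in $\*C_0$), and being connected with the AP it has the JEP, by the observation in \cref{sec:prelims} that the AP turns troughs of a zigzag into peaks; now apply (i) to $F : \*I \to \*C_0$. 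Finally, be aware that your proposal addresses only the last clause of \cref{thm:amalgam}: the equivalences (i)$\iff$(ii)$\iff$(iii) and, for posets, (iv)$\iff$(v), which you take as ``already established,'' are the bulk of what the theorem asserts (they occupy \cref{sec:setinj,sec:posetal,sec:general} of the paper), so as written your text is at best a proof of one clause modulo the rest.
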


A corollary of our proof yields a simple decision procedure for these conditions (for finite $\*I$).  This is somewhat surprising, because Paré's result (\cref{thm:pushout}) implies that the analogous question of whether every $\*I$-shaped diagram has a colimit in a category with pushouts is \emph{un}decidable.

This paper is organized as follows.  In \cref{sec:prelims}, we fix notations and review some categorical concepts.  In \cref{sec:setinj}, we introduce an invariant $\@L(\*I)$, similar to the fundamental groupoid, and use it to prove the equivalence of (ii) and (iii) in \cref{thm:amalgam} for arbitrary small (not necessarily finitely generated) $\*I$.  In \cref{sec:posetal}, we analyze upward-simply-connected posets in more detail, deriving the conditions (iv) and (v) equivalent to (iii) and proving that they imply (i) when $\*I$ is a finite poset.  In \cref{sec:general}, we remove this restriction on $\*I$ and complete the proof.  Finally, in \cref{sec:decidability}, we discuss decidability of the equivalent conditions in \cref{thm:amalgam} and of the analogous conditions in Paré's result.

\medskip
\textit{Acknowledgments.}  We would like to thank Alexander Kechris for providing some feedback on an earlier draft of this paper.

\section{Preliminaries}
\label{sec:prelims}

We begin by fixing notations and terminology for some basic categorical notions; see \cite{ML}.

For a category $\*C$ and objects $X, Y \in \*C$, we denote a morphism between them by $f : X --->[\*C]{} Y$.  We use the terms \defn{morphism} and \defn{arrow} interchangeably.

We use $\*{Set}$ to denote the category of sets and functions, $\*{Inj}$ to denote the category of sets and injections, and $\*{PInj}$ to denote the category of sets and partial injections.  We use $\*{Cat}, \*{Gpd}$ to denote the categories of small categories, resp., small groupoids.\footnote{We will generally ignore size issues; it is straightforward to check that except where smallness is explicitly assumed, the following definitions and results work equally well for large categories.}
%(While we will have occasion to deal with large categories on the same level as small categories, there will never be any significant size issues; thus, we will often treat large categories as if they were small.)

We regard a preordered set $(\*I, \le)$ as a category where there is a unique arrow $I --->[\*I] J$ iff $I \le J$.

We say that a category $\*C$ is \defn{monic} if every morphism $f : X --->[\*C]{} Y$ in it is monic (i.e., if $f \circ g = f \circ h$ then $g = h$, for all $g, h : Z --->[\*C]{} X$).  Similarly, $\*C$ is \defn{idempotent} if every endomorphism $f : X --->[\*C]{} X$ is idempotent (i.e., $f \circ f = f$).

A category $\*I$ is \defn{finitely generated} if there are finitely many arrows in $\*I$ whose closure under composition is all arrows in $\*I$.  Note that such $\*I$ necessarily has finitely many objects, and that a preorder is finitely generated iff it is finite.

For a category $\*C$ and a small category $\*I$, a \defn{diagram} of shape $\*I$ in $\*C$ is simply a functor $F : \*I -> \*C$.  A \defn{cocone} $(X, \-f)$ over a diagram $F$ consists of an object $X \in \*C$ together with a family of morphisms $\-f = (f_I : F(I) --->[\*C]{} X)_{I \in \*I}$, such that for each $i : I --->[\*I]{} J$, we have $f_I = f_J \circ F(i)$.  A \defn{morphism between cocones} $(X, \-f)$ and $(Y, \-g)$ over the diagram $F$ is a morphism $h : X --->[\*C]{} Y$ such that for each object $I \in \*I$, we have $h \circ f_I = g_I$.  A cocone $(X, \-f)$ over $F$ is a \defn{colimit} of $F$ if it is initial in the category of cocones over $F$, i.e., for any other cocone $(Y, \-g)$ there is a unique cocone morphism $h : (X, \-f) -> (Y, \-g)$; in this case we write $X = \injlim F$, and usually use a letter like $\iota_I$ for the cocone maps $f_I$.
\begin{equation*}
\begin{tikzcd}
F(I) \rar{F(i)} \drar[swap]{f_I} \ar{drr}[swap,pos=.4]{g_I} & F(J) \dar[crossing over][pos=.3]{f_J} \drar{g_J} \\[2em]
& X = \injlim F \rar[dashed][swap]{h} & Y
\end{tikzcd}
\end{equation*}

As mentioned above, a category $\*C$ has the \defn{amalgamation property (AP)} if every \defn{pushout diagram} (i.e., diagram of shape $\bullet <- \bullet -> \bullet$) in $\*C$ has a cocone (colimits of such diagrams are called \defn{pushouts}), and $\*C$ has the \defn{joint embedding property (JEP)} (regardless of whether $\*C$ is monic) if every finite coproduct diagram (i.e., diagram of finite discrete shape) in $\*C$ has a cocone.  (So the empty category does not have the JEP.)

A category $\*C$ is \defn{connected} if it has exactly one \defn{connected component}, where $X, Y \in \*C$ are in the same connected component if they are joined by a zigzag of morphisms
\begin{equation*}
\begin{tikzcd}
& X_1 && X_3 && Y \\
X \urar && X_2 \ular \urar && \dotsb
\end{tikzcd}
\end{equation*}
(So the empty category is not connected.)  We use $\pi_0(\*C)$ to denote the set (or class, if $\*C$ is large) of connected components of $\*C$.  Note that in the presence of the AP, connectedness is equivalent to the JEP, since the AP may be used to turn ``troughs'' into ``peaks'' in a zigzag.

\subsection{Simply-connected categories}

\begin{definition}
\label{defn:pi1}
The \defn{fundamental groupoid} of a category $\*I$, denoted $\pi_1(\*I)$, is the groupoid freely generated by $\*I$ (as a category).  Thus $\pi_1(\*I)$ has the same objects as $\*I$, while its morphisms are words made up of the morphisms in $\*I$ together with their formal inverses, modulo the relations which hold in $\*I$ (and the relations which say that the formal inverses are inverses).

We say that $\*I$ is \defn{simply-connected} if $\pi_1(\*I)$ is an equivalence relation, i.e., has at most one morphism between any two objects.
\end{definition}

\begin{remark}
\label{rmk:nerve}
There is also a topological definition: $\pi_1(\*I)$ is the same as the fundamental groupoid of the (simplicial) nerve of $\*I$; see \cite[\S1]{Q} for the general case.  When $\*I$ is a poset, the \defn{nerve} of $\*I$ can be defined as the (abstract) simplicial complex whose $n$-simplices are the chains of cardinality $n+1$ in $\*I$; see \cite[1.4.4, \S2.4]{B} (in which the nerve is called the \emph{order complex}).
\end{remark}

We now state Paré's result \cite{P}, mentioned in the Introduction, characterizing colimits which can be built using pushouts:

\begin{theorem}[Paré]
\label{thm:pushout}
Let $\*I$ be a finitely generated category.  The following are equivalent:
\begin{enumerate}
\item[(i)]  Every $\*I$-shaped diagram in a category with pushouts has a colimit.
\item[(ii)]  $\*I$ is simply-connected.
\end{enumerate}
\end{theorem}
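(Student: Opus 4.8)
The plan is to prove the two implications of \cref{thm:pushout} separately, with the fundamental groupoid $\pi_1(\*I)$ serving as the obstruction in both directions. For the easy direction (i)$\Rightarrow$(ii) I would argue the contrapositive, exhibiting a \emph{single} category with pushouts in which any non--simply-connected shape $\*I$ fails to admit a colimit. The category $\*{Inj}$ of sets and injections is a convenient choice: a pushout of injections $B \hookleftarrow A \hookrightarrow C$ is computed as in $\*{Set}$ and its legs remain injective, so $\*{Inj}$ has all pushouts. Suppose then that $\*I$ is not simply-connected, so that $\pi_1(\*I)$ has two distinct parallel morphisms $p \ne q$, whence $\gamma := q^{-1} p$ is a non-identity element of $\pi_1(\*I)(A,A)$ for some object $A$. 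Take $F$ to be the composite of the canonical functor $\*I \to \pi_1(\*I)$ with the representable functor $\pi_1(\*I)(A,-) : \pi_1(\*I) \to \*{Set}$. Since every morphism of the groupoid $\pi_1(\*I)$ is invertible, $F$ sends every arrow to a bijection, hence is a genuine diagram in $\*{Inj}$.

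The key point is that $F$ has \emph{no} cocone in $\*{Inj}$. Indeed, for any cocone $(X, \-f)$, inverting the (now bijective) maps $F(g)$ and composing around the loop $\gamma$ forces $f_A \circ F(\gamma) = f_A$; but $F(\gamma)$ is left multiplication by $\gamma$ on the set $F(A) = \pi_1(\*I)(A,A)$, which sends $\mathrm{id}_A$ to $\gamma \ne \mathrm{id}_A$. Thus $f_A$ would identify two distinct points and cannot be injective, a contradiction. With no cocone at all, $F$ certainly has no colimit, so (i) fails. (This is exactly the monodromy witnessed by the loop $A <- D -> B <- C -> A$ making $\pi_1$ of the bowtie \eqref{eqn:bowtie} equal to $\#Z$.)

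For the hard direction (ii)$\Rightarrow$(i), I would construct $\injlim F$, for a diagram $F : \*I \to \*C$ into a category $\*C$ with pushouts, as a finite sequence of pushouts, using that $\*I$ is finitely generated. I treat the connected case as the essential content. Fix finitely many generating arrows, let $\Gamma$ be the resulting graph on the objects of $\*I$, and choose a spanning tree $T$. First glue the objects along $T$: processing the tree edges outward from a root, each arrow $F(I) \to F(J)$ directed away from the built-up part contributes one pushout, while each arrow directed toward it contributes a cocone leg for free. This produces an object $P$ together with compatible legs $c_I : F(I) \to P$ respecting every tree arrow. It then remains to impose each non-tree generating arrow $e : I \to J$ and each defining relation of $\*I$, i.e.\ to coequalize the two legs $c_I$ and $c_J \circ F(e)$.

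The main obstacle — and the only place simple-connectedness is used — is to carry out these coequalizations with pushouts alone. For each non-tree edge $e$, the loop formed by $e$ together with the tree path from $J$ back to $I$ is null-homotopic in $\pi_1(\*I)$, and tracing this null-homotopy through the legs shows that $c_I$ and $c_J \circ F(e)$ already agree after restriction along a map $F(K) \to F(I)$ from some object $K$ handled earlier. Agreement on such a sub-object is precisely what converts an unavailable coequalizer into an available pushout: rather than coequalizing through the nonexistent codiagonal $F(I) \sqcup F(I) \to F(I)$, one coequalizes through $F(I) \sqcup_{F(K)} F(I) \to F(I)$, and both this object and the resulting quotient are pushouts. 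Sequencing these identifications over the finitely many non-tree generators and relations yields an object which I would finally verify is $\injlim F$ by checking the universal property against an arbitrary cocone. I expect the technical heart to be the bookkeeping of this last stage: organizing the relations so that at every step the required agreement on a sub-object is available. This is exactly the combinatorial content of $\pi_1(\*I)$ being trivial, and the bowtie shows it genuinely fails when simple-connectedness does.
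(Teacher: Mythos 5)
The paper does not prove \cref{thm:pushout} at all: it quotes the result from Par\'e \cite{P}. So your proposal has to stand on its own, and it does not, because the ``easy'' direction rests on a false claim. You assert that $\*{Inj}$ has all pushouts because the $\*{Set}$-pushout of a span of injections has injective legs. That object is indeed a cocone in $\*{Inj}$, but it is not a pushout there: the mediating map to an arbitrary injective cocone need not be injective or even exist (for the span $1 \hookleftarrow \emptyset \hookrightarrow 1$ the $\*{Set}$-pushout is $2$, which admits no morphism in $\*{Inj}$ to the cocone on $1$ with both legs the identity), and in fact $\*{Inj}$ has no pushout of that span at all. The paper itself stresses exactly this in \cref{ex:boat} (``$\*{Inj}$ does \emph{not} have pushouts''), which is why that example is consistent with Par\'e's theorem. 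Consequently, exhibiting a diagram with no cocone in $\*{Inj}$ says nothing about condition (i). The repair is small: keep your idea but land in a groupoid. Every groupoid has all pushouts --- the pushout of $b : A \to B$, $c : A \to C$ is $(B, 1_B, b \circ c^{-1})$ --- so take as counterexample category $\pi_1(\*I)$ itself and as diagram the canonical functor $\eta : \*I \to \pi_1(\*I)$ (equivalently, run your representable argument in the groupoid of sets and bijections). For any cocone $(X, (\phi_I)_I)$, the equations $\phi_J \circ \eta(i) = \phi_I$ propagate, since the $\phi_I$ and the $\eta(i)$ are invertible, to $\phi_J \circ w = \phi_I$ for \emph{every} $w \in \pi_1(\*I)(I,J)$; as $\phi_J$ is monic, any two parallel morphisms of $\pi_1(\*I)$ would then coincide. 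So a non-simply-connected $\*I$ yields a diagram with no cocone, hence no colimit, in a category with pushouts.

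Your ``hard'' direction is a strategy sketch, not a proof. The conversion device you invoke is sound: if $u, v : F(I) \rightrightarrows P$ satisfy $u \circ d = v \circ d$ for some $d : F(K) \to F(I)$, then their coequalizer is obtained by two pushouts (form $F(I) \sqcup_{F(K)} F(I)$, then push out the induced map to $P$ along the codiagonal). But the crux, which you explicitly defer to ``bookkeeping,'' is precisely the content of the theorem: one must show that the non-tree generators and the relations of $\*I$ can be ordered so that, at each stage, the two legs to be identified already agree after restriction along a \emph{single} morphism from an object already incorporated. An elementary null-homotopy in $\pi_1(\*I)$ gives no such control --- its intermediate words pass through objects and relations unrelated to your chosen spanning tree, and every pushout you perform changes the legs $c_I$, so the asserted agreement is not ``traced'' but must be proved; this is where finite generation and simple-connectedness do their real work in Par\'e's argument. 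A further wrinkle: under the paper's definition, simple-connectedness does not imply connectedness (the two-object discrete category is simply-connected), yet (i) fails for such $\*I$; Par\'e's notion includes connectedness, so your reduction ``to the connected case'' also needs to be made honest. As it stands, neither implication is established: the first is false as written but fixable as above, the second is missing its key lemma.
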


\begin{example}
\label{ex:boat}
Let $\*I$ be the shape of the diagram \eqref{eqn:bowtie} in the Introduction.  As mentioned there, $\pi_1(\*I)$ is equivalent to $\#Z$ (i.e., it is connected and its automorphism group at each object is $\#Z$).  Now let $\*J \supseteq \*I$ be the shape of the (commuting) diagram
\begin{equation*}
\begin{tikzcd}[column sep=1em]
A && B \\
C \uar{f} \ar{urr}[pos=.8]{h} && D \ar[crossing over]{ull}[pos=.8,swap]{g} \uar[swap]{k} \\
& E \ular{u} \urar[swap]{v}
\end{tikzcd}.
\end{equation*}
Unlike $\*I$, $\*J$ is simply-connected.  Thus by \cref{thm:pushout}, a colimit of a $\*J$-shaped diagram can be constructed out of pushouts.  However, a cocone over a $\*J$-shaped diagram cannot necessarily be constructed from the AP: take $A = C = D = 1$, $B = 2$, and $h \ne k$ in $\*{Inj}$ as in the Introduction, and $E = \emptyset$.  Note that there is no contradiction, since $\*{Inj}$ does \emph{not} have pushouts.

The ``reason'' that $\*J$ is simply-connected even though $\*I$ is not is that the generating ``loop'' $f \circ h^{-1} \circ k \circ g^{-1}$ in $\pi_1(\*I)$ becomes trivial in $\pi_1(\*J)$:
\begin{align*}
f \circ h^{-1} \circ k \circ g^{-1}
&= (f \circ u) \circ (u^{-1} \circ h^{-1}) \circ k \circ g^{-1} \\
&= (g \circ v) \circ (v^{-1} \circ k^{-1}) \circ k \circ g^{-1}
= 1_A.
\end{align*}
This suggests that to characterize when a $\*J$-shaped diagram has a cocone in any category with the AP, we need a finer invariant than $\pi_1$, which does not allow the use of $u, v$ to simplify the loop $f \circ h^{-1} \circ k \circ g^{-1}$ above.
\end{example}

\subsection{Inverse categories}

\begin{definition}
An \defn{inverse category} is a category $\*C$ such that every morphism $f : X --->[\*C]{} Y$ has a unique \defn{pseudoinverse} $f^{-1} : Y --->[\*C]{} X$ obeying $f \circ f^{-1} \circ f = f$ and $f^{-1} \circ f \circ f^{-1} = f^{-1}$. 

We write $\*{InvCat}$ for the category of small inverse categories.
\end{definition}

For basic properties of inverse categories, see e.g., \cite[\S2]{Li}; the one-object case of \emph{inverse monoids} is well-known in semigroup theory \cite{La}.  Here are some elementary facts about inverse categories we will use without mention:

\begin{lemma}
Let $\*C$ be an inverse category.
\begin{itemize}
\item  $f |-> f^{-1}$ is an involutive functor $\*C^\op -> \*C$.
\item  Idempotents in $\*C$ commute.
\item  $f : X --->[\*C]{} Y$ is monic iff it is split monic iff $f^{-1} \circ f = 1_X$.
\end{itemize}
\end{lemma}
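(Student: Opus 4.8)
The plan is to prove the three assertions in a slightly different order from how they are listed, since the commutativity of idempotents is really a prerequisite for the functoriality claim. Throughout I would use only the defining axioms $f \circ f^{-1} \circ f = f$ and $f^{-1} \circ f \circ f^{-1} = f^{-1}$ together with uniqueness of the pseudoinverse. Two immediate observations will be used repeatedly: first, the symmetry of the two axioms exhibits $f$ as a pseudoinverse of $f^{-1}$, so uniqueness gives $(f^{-1})^{-1} = f$; second, for any $f$ the morphisms $f \circ f^{-1}$ and $f^{-1} \circ f$ are idempotent, e.g.\ $(f \circ f^{-1}) \circ (f \circ f^{-1}) = (f \circ f^{-1} \circ f) \circ f^{-1} = f \circ f^{-1}$. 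In particular every idempotent $e$ satisfies $e \circ e \circ e = e$, forcing $e^{-1} = e$ by uniqueness.

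The main obstacle is the commutativity of idempotents; this is the one genuinely nontrivial step (it is the classical lemma of inverse semigroup theory). Given idempotents $e, f \colon X \to X$, I would set $z = (e \circ f)^{-1}$ and first check directly that $f \circ z \circ e$ is also a pseudoinverse of $e \circ f$: the verifications use only $e \circ e = e$, $f \circ f = f$, and the two inverse identities for $z$, so uniqueness yields $z = f \circ z \circ e$. Substituting this back and applying $z \circ (e \circ f) \circ z = z$ collapses $z \circ z$ to $f \circ (z \circ (e \circ f) \circ z) \circ e = f \circ z \circ e = z$, so $z$ is idempotent; but then $z = z^{-1} = e \circ f$ by involution, whence $e \circ f$ is idempotent (and symmetrically so is $f \circ e$). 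With both products idempotent, a direct computation shows $f \circ e$ is a pseudoinverse of $e \circ f$ — for instance $(e \circ f) \circ (f \circ e) \circ (e \circ f) = (e \circ f) \circ (e \circ f) = e \circ f$ — so uniqueness forces $e \circ f = f \circ e$.

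Granting that idempotents commute, the contravariant functoriality $(g \circ f)^{-1} = f^{-1} \circ g^{-1}$ follows by the usual trick: I verify that $f^{-1} \circ g^{-1}$ is a pseudoinverse of $g \circ f$, the key point being that in $(g \circ f) \circ (f^{-1} \circ g^{-1}) \circ (g \circ f)$ one may commute the idempotents $f \circ f^{-1}$ and $g^{-1} \circ g$ past each other to collapse the expression back to $g \circ f$, and symmetrically for the other identity; uniqueness then gives the claim. Combined with $1_X^{-1} = 1_X$ (immediate from uniqueness) and $(f^{-1})^{-1} = f$ noted above, this says exactly that $(-)^{-1}$ is an involutive functor $\*C^\op \to \*C$.

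For the final item I would run a cycle of implications. If $f^{-1} \circ f = 1_X$, then $f$ is split monic, witnessed by the retraction $f^{-1}$; split monics are monic in any category by left-cancelling the retraction; and if $f$ is monic, then from $f \circ (f^{-1} \circ f) = (f \circ f^{-1} \circ f) = f = f \circ 1_X$ we cancel $f$ to conclude $f^{-1} \circ f = 1_X$. None of these three steps presents any difficulty, so the entire lemma reduces to the commutativity of idempotents.
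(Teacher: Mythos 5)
Your proof is correct in all three parts, and the logical ordering you chose (idempotents commute first, then functoriality, then the characterization of monics) is exactly right, since the verification that $f^{-1}\circ g^{-1}$ is a pseudoinverse of $g\circ f$ genuinely needs the idempotents $f\circ f^{-1}$ and $g^{-1}\circ g$ to commute. Note that the paper itself offers no proof of this lemma: it states these as ``elementary facts\ldots we will use without mention'' and defers to the references on inverse categories and inverse semigroups (Linckelmann; Lawson). Your argument is precisely the classical inverse-semigroup argument, adapted verbatim to the many-object setting: show $(e\circ f)^{-1}$ absorbs into $f\circ(e\circ f)^{-1}\circ e$ by uniqueness of pseudoinverses, deduce that $(e\circ f)^{-1}$ is idempotent and hence self-inverse, conclude $e\circ f$ is idempotent, and then play $f\circ e$ off against $e\circ f$ via uniqueness once more. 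All the individual computations check out (in particular the collapse $z\circ z = f\circ(z\circ(e\circ f)\circ z)\circ e = z$ and the left-cancellation step $f\circ(f^{-1}\circ f) = f\circ 1_X$ in the monic case), so there is no gap; this is the proof the paper implicitly relies on.
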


The archetypical example of an inverse category is $\*{PInj}$, the category of sets and partial injections (where $f^{-1}$ is given by the partial inverse of $f$).  In fact, the axioms of an inverse category capture precisely the algebraic properties of $\*{PInj}$, in the sense that we have the following representation theorem, generalizing the Wagner-Preston representation theorem for inverse semigroups and the Yoneda lemma (see \cite[2.5]{Li}, \cite[3.8]{CL}):

\begin{theorem}
\label{thm:invcat-yoneda}
Let $\*C$ be a small inverse category.  We have an embedding functor
\begin{align*}
\Psi_\*C : \*C &--> \*{PInj} \\
X &|--> \sum_{Z \in \*C} \Hom_\*C(Z, X) \\
(X --->[\*C]{f} Y) &|--> \left(\begin{aligned}
\sum_Z f^{-1} \circ f \circ \Hom_\*C(Z, X) &\overset{\sim}{-->} \sum_Z f \circ f^{-1} \circ \Hom_\*C(Z, Y) \\
g &|--> f \circ g
\end{aligned}\right).
\end{align*}
Here $\sum$ denotes disjoint union, and $f^{-1} \circ f \circ \Hom_\*C(Z, X)$ denotes the set of all composites $f^{-1} \circ f \circ g$ for $g : Z --->[\*C]{} X$, equivalently the set of all $g : Z --->[\*C]{} X$ such that $g = f^{-1} \circ f \circ g$ (and similarly for $f \circ f^{-1} \circ \Hom_\*C(Z, Y)$).
\end{theorem}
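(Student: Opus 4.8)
The plan is to verify directly that the stated formulas define a faithful, injective-on-objects functor into $\*{PInj}$, using only the elementary facts listed above: that pseudoinverses are unique, that $(-)^{-1}$ is an involutive functor (so $(f\circ g)^{-1}=g^{-1}\circ f^{-1}$), and that idempotents commute. Throughout I write $\Psi=\Psi_\*C$ and use that for any $f:X\to Y$ the endomorphisms $f^{-1}\circ f$ at $X$ and $f\circ f^{-1}$ at $Y$ are idempotent. First I would check that each $\Psi(f)$ really is a partial injection $\Psi(X)\rightharpoonup\Psi(Y)$. Its declared domain is $D_f:=\{g\in\Psi(X): g=f^{-1}\circ f\circ g\}$ and its declared range is $R_f:=\{h\in\Psi(Y): h=f\circ f^{-1}\circ h\}$. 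For $g\in D_f$ one has $f\circ f^{-1}\circ(f\circ g)=(f\circ f^{-1}\circ f)\circ g=f\circ g$, so $f\circ g\in R_f$; the assignment $g\mapsto f\circ g$ is injective on $D_f$ because $f^{-1}\circ(f\circ g)=g$ there, and surjective onto $R_f$ because $h\mapsto f^{-1}\circ h$ sends $R_f$ back into $D_f$ with $f\circ(f^{-1}\circ h)=h$. Hence $\Psi(f)$ is a bijection $D_f\to R_f$, i.e. a morphism of $\*{PInj}$.

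Second comes functoriality. Clearly $\Psi(1_X)$ is the identity partial injection on $\Psi(X)$. For composites $X\xrightarrow{f}Y\xrightarrow{f'}Z$, both $\Psi(f'\circ f)$ and $\Psi(f')\circ\Psi(f)$ act by $g\mapsto(f'\circ f)\circ g$, so the only issue is that their domains coincide. One inclusion is routine: if $g=f^{-1}\circ f\circ g$ and $f\circ g=f'^{-1}\circ f'\circ f\circ g$, then $(f'\circ f)^{-1}\circ(f'\circ f)\circ g=g$. The reverse inclusion—that $g\in\mathrm{dom}(\Psi(f'\circ f))$ forces both conditions defining $\mathrm{dom}(\Psi(f')\circ\Psi(f))$—is where the axioms do real work, and I expect this to be the main obstacle. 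The key move is that $f\circ f^{-1}$ and $f'^{-1}\circ f'$ are \emph{both} idempotents at $Y$, hence commute; this lets one rewrite $(f\circ f^{-1})(f'^{-1}\circ f')$ as $(f'^{-1}\circ f')(f\circ f^{-1})$ and then collapse $f\circ f^{-1}\circ f=f$. This is exactly the step that fails in a general category, and it isolates why inverse categories are the right setting.

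Finally, for the embedding claim I would exhibit, for each $f:X\to Y$, the canonical element $f^{-1}\circ f\in\Hom_\*C(X,X)\subseteq\Psi(X)$, which lies in $D_f$ and satisfies $\Psi(f)(f^{-1}\circ f)=f\circ f^{-1}\circ f=f$. If $\Psi(f)=\Psi(f')$ for $f,f':X\to Y$, then $D_f=D_{f'}$, so $f^{-1}\circ f\in D_{f'}$ and $f'^{-1}\circ f'\in D_f$; commuting these two idempotents at $X$ gives $f^{-1}\circ f=f'^{-1}\circ f'$, and evaluating both partial injections at this common idempotent yields $f=f'$. Thus $\Psi$ is faithful. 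Injectivity on objects is immediate, since $1_X\in\Psi(X)$ but $1_X\notin\Psi(Y)$ whenever $Y\ne X$ (a morphism records its codomain, so the sets $\sum_Z\Hom_\*C(Z,X)$ for distinct $X$ are disjoint and nonempty). I would close by noting that the object rule $X\mapsto\sum_Z\Hom_\*C(Z,X)$ together with the action $g\mapsto f\circ g$ is precisely the Yoneda/Wagner--Preston left-multiplication representation, which is what makes this theorem simultaneously generalize both named results.
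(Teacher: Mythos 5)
Your proof is correct, and it is worth noting that the paper itself gives no proof of this theorem: it is quoted from the literature (\cite[2.5]{Li}, \cite[3.8]{CL}), so your argument supplies a self-contained verification that the paper omits. The route you chose is the standard Wagner--Preston/Yoneda one, and the step you flagged as the main obstacle --- equality of the domains of $\Psi(f'\circ f)$ and $\Psi(f')\circ\Psi(f)$ --- is indeed the crux, and your key move does resolve it: if $g = f^{-1}\circ f'^{-1}\circ f'\circ f\circ g$, then the condition $g = f^{-1}\circ f\circ g$ follows from $f^{-1}\circ f\circ f^{-1}=f^{-1}$ alone, while applying $f$ to the hypothesis gives $f\circ g = (f\circ f^{-1})\circ (f'^{-1}\circ f')\circ f\circ g$, and commuting these two idempotents at $Y$ and collapsing $f\circ f^{-1}\circ f = f$ yields $f\circ g = f'^{-1}\circ f'\circ f\circ g$, exactly as you indicated. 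Your faithfulness argument is also correct and is the standard one: $f^{-1}\circ f$ lies in the domain of $\Psi(f)$ and is sent to $f$, and if $\Psi(f)=\Psi(f')$ then the shared domain forces $f^{-1}\circ f = f'^{-1}\circ f'$ (again by commuting idempotents), whence evaluation gives $f=f'$. Two minor remarks: first, injectivity on objects rests on the convention that hom-sets with distinct codomains are disjoint, which is standard (and can always be arranged by tagging each morphism with its codomain); second, what the paper actually uses downstream (in \cref{thm:setinj-amalgam} and \cref{thm:usc-mon-posetal}) is only faithfulness, so your proof covers everything needed. The one stylistic difference from the sources the paper cites is that they develop the representation via the general theory of restriction/inverse categories, whereas your verification is purely equational from the two axioms plus commutation of idempotents; this is more elementary and arguably better suited to the paper's stated goal of self-containedness.
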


\section{Amalgamating sets}
\label{sec:setinj}

In this section, we characterize the small categories $\*I$ such that every $\*I$-shaped diagram in $\*{Inj}$ has a cocone.  We begin with the following easy observation:

\begin{lemma}
\label{thm:setinj-colim}
A diagram $F : \*I -> \*{Inj}$ has a cocone iff the colimit of $F$ in $\*{Set}$ is such that the canonical maps $\iota_I : F(I) -> \injlim F$ are injective, for all $I \in \*I$.
\end{lemma}
\begin{proof}
If the $\iota_I$ are injective, then $(\injlim F, (\iota_I)_{I \in \*I})$ is a cocone in $\*{Inj}$.  Conversely, if $F$ has a cocone $(X, (f_I)_{I \in \*I})$ in $\*{Inj}$, then the unique cocone morphism $g : \injlim F -> F$ is such that $g \circ \iota_I = f_I$ is injective for each $I$, hence $\iota_I$ is injective for each $I$.
\end{proof}

Now recall that for a diagram $F : \*I -> \*{Inj}$, the standard construction of $\injlim F$ in $\*{Set}$ is as the quotient of the disjoint sum:
\begin{align*}
\injlim F := (\sum_{I \in \*I} F(I))/\{(x, F(i)(x)) \mid i : I --->[\*I]{} J,\, x \in F(I)\}.
\end{align*}
Two elements $x, y \in F(I)$ are thus identified iff they are connected by a zigzag
\begin{equation*}
\begin{tikzcd}[column sep=3em]
& x_1 \mathrlap{{}\in F(I_1)} && x_3 \mathrlap{{}\in F(I_3)} & ~ & x_{2n-1} \mathrlap{{}\in F(I_{2n-1})} \\
x \mathrlap{{}\in F(I)} \urar[mapsto]{F(i_1)} && x_2 \mathrlap{{}\in F(I_2)} \ular[mapsto][swap]{F(i_2)} \urar[mapsto]{F(i_3)} && \hspace{3em} \uar[draw=none][pos=.5,anchor=center]{\textstyle\dotsm} && y \in F(I) \ular[mapsto][swap]{F(i_{2n})}.
\end{tikzcd}
\end{equation*}
Since the $F(i_k)$ are injective, the endpoint $y$ of this zigzag is determined by $x$ together with the ``path'' $I --->[\*I]{i_1} I_1 <---[\*I]{i_2} I_2 --->[\*I]{i_2} \dotsb <---[\*I]{i_{2n}} I$ in $I$; in other words, $F$ induces an action of such ``paths'' via partial injections between sets.  This motivates defining the category of such ``paths'', while keeping in mind that they will be acting via partial injections:

\begin{definition}
\label{defn:L}
The \defn{left fundamental inverse category} of a category $\*I$, denoted $\@L(\*I)$, is the inverse category freely generated by $\*I$ such that every morphism in $\*I$ becomes monic in $\@L(\*I)$.

Thus, we have a functor $\eta = \eta_\*I : \*I -> \@L(\*I)$, such that $\@L(\*I)$ is generated by the morphisms $\eta(i), \eta(i)^{-1}$ for $i : I --->[\*I]{} J$, such that $\eta(i)^{-1} \circ \eta(i) = 1_I$ for each such $i$, and such that any other functor $F : \*I -> \*C$ into an inverse category with $F(i)^{-1} \circ F(i) = 1_{F(I)}$ for each $i$ factors uniquely through $\eta$.  (We write $i$ for $\eta(i)$ when there is no risk of confusion.)

We say that $\*I$ is \defn{upward-simply-connected} if $\@L(\*I)$ is idempotent.  (See \cref{thm:usc-etc} below for equivalent conditions when $\*I$ is a poset.)
\end{definition}

Thus $\@L$ extends in an obvious manner to a functor $\*{Cat} -> \*{InvCat}$, which is left adjoint to the functor $\@S : \*{InvCat} -> \*{Cat}$ taking an inverse category to its subcategory of monomorphisms.  To see that $\@L(\*I)$ is a finer invariant than $\pi_1(\*I)$, note that the forgetful functor $\*{Gpd} -> \*{Cat}$ (to which $\pi_1$ is left adjoint) factors through $\@S$; indeed, a groupoid is precisely an inverse category in which every morphism is monic.  Thus $\pi_1(\*I)$ is a quotient of $\@L(\*I)$ (by the least congruence which makes every arrow monic).  Since idempotents in a groupoid are identities, we get

\begin{corollary}
\label{thm:usc-sc}
If a category $\*I$ is upward-simply-connected, then it is simply-connected.  \qed
\end{corollary}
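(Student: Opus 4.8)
The plan is to make explicit the comparison functor $q\colon \@L(\*I)\to\pi_1(\*I)$ alluded to in the paragraph preceding the statement, and then transport idempotency across it. To construct $q$, I would apply the universal property of $\@L$ to the canonical functor $F\colon \*I\to\pi_1(\*I)$ into the fundamental groupoid: since $\pi_1(\*I)$ is a groupoid, hence an inverse category in which every morphism is monic, each $F(i)$ satisfies $F(i)^{-1}\circ F(i)=1$, so $F$ factors uniquely as $F=q\circ\eta$ through an inverse-category functor $q\colon \@L(\*I)\to\pi_1(\*I)$. (Equivalently, this is the comparison arising from $\pi_1\cong G\circ\@L$, where $G$ is left adjoint to the inclusion $\*{Gpd}\hookrightarrow\*{InvCat}$, as in the preceding discussion.) I would then record two properties of $q$: it is the identity on objects (both categories have the objects of $\*I$), and it is full, since every morphism of $\pi_1(\*I)$ is a word in the morphisms $i$ of $\*I$ and their formal inverses, while $q(\eta(i))=F(i)$ and $q(\eta(i)^{-1})=q(\eta(i))^{-1}=F(i)^{-1}$ (any functor between inverse categories preserves pseudoinverses, by their equational uniqueness).

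With $q$ in hand, suppose $\*I$ is upward-simply-connected, i.e.\ $\@L(\*I)$ is idempotent, and let $\phi\colon X\to X$ be any endomorphism of $\pi_1(\*I)$. By fullness I can pick $f\colon X\to X$ in $\@L(\*I)$ with $q(f)=\phi$; then $f\circ f=f$ by hypothesis, so $\phi\circ\phi=q(f\circ f)=q(f)=\phi$, i.e.\ $\phi$ is idempotent. Since $\pi_1(\*I)$ is a groupoid, $\phi$ is invertible, and cancelling in $\phi\circ\phi=\phi$ forces $\phi=1_X$ (idempotents in a groupoid are identities).

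Finally I would deduce simple-connectedness from the triviality of all endomorphisms. Given parallel $\phi,\psi\colon X\to Y$ in the groupoid $\pi_1(\*I)$, the endomorphism $\psi^{-1}\circ\phi$ of $X$ equals $1_X$ by the previous step, whence $\phi=\psi$; thus $\pi_1(\*I)$ has at most one morphism between any two objects, which is exactly the definition of $\*I$ being simply-connected.

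The argument is essentially formal, so I do not expect a serious obstacle; the only point requiring care is the construction and fullness of $q$, i.e.\ that $\pi_1(\*I)$ genuinely is a quotient of $\@L(\*I)$ and not some unrelated invariant. This rests on the factorization through $\@L$ together with the automatic preservation of pseudoinverses by functors of inverse categories, after which the transport of idempotency is a one-line diagram chase.
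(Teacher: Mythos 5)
Your proof is correct and takes essentially the same route as the paper: the paper's (implicit) proof is exactly the observation in the preceding paragraph that $\pi_1(\*I)$ is a quotient of $\@L(\*I)$ --- i.e., admits a full, identity-on-objects comparison functor, which is your $q$ --- combined with the fact that idempotents in a groupoid are identities. You have merely spelled out the details the paper leaves implicit (the universal-property construction of $q$, automatic preservation of pseudoinverses, and the final cancellation showing that triviality of endomorphisms forces at most one morphism between any two objects).
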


\begin{proposition}
\label{thm:setinj-amalgam}
Let $\*I$ be a small category.  The following are equivalent:
\begin{enumerate}
\item[(i)]  Every diagram $F : \*I -> \*{Inj}$ has a cocone.
\item[(ii)]  The diagram $\Psi_{\@L(\*I)} \circ \eta_\*I : \*I -> \*{Inj}$ has a cocone, where $\Psi_{\@L(\*I)} : \@L(\*I) -> \*{PInj}$ is the embedding from \cref{thm:invcat-yoneda} (whose restriction along $\eta_\*I : \*I -> \@L(\*I)$ lands in the subcategory $\*{Inj}$).
\item[(iii)]  $\*I$ is upward-simply-connected.
\end{enumerate}
\end{proposition}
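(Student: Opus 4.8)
The plan is to establish the cycle (i)$\Rightarrow$(ii)$\Rightarrow$(iii)$\Rightarrow$(i), with essentially all of the work concentrated in a single lemma. The implication (i)$\Rightarrow$(ii) is immediate, as (ii) merely asserts that one particular $\*I$-shaped diagram in $\*{Inj}$ has a cocone; here I only need the parenthetical remark in the statement that $\Psi_{\@L(\*I)} \circ \eta$ does land in $\*{Inj}$, which holds because each $\eta(i)$ is monic in $\@L(\*I)$ and $\Psi$ carries monic morphisms (those $f$ with $f^{-1} \circ f = 1$) to total injections.

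The technical heart is a lemma formalizing the discussion preceding \cref{defn:L}. Given any $F : \*I \to \*{Inj}$, composing with the inclusion $\*{Inj} \hookrightarrow \*{PInj}$ yields a functor into an inverse category satisfying $F(i)^{-1} \circ F(i) = 1$ for every $i$; by the universal property of $\@L(\*I)$ it factors uniquely as $F = \bar F \circ \eta$ for an inverse-category functor $\bar F : \@L(\*I) \to \*{PInj}$. I will prove: for $x \in F(I)$ and $y \in F(J)$, one has $\iota_I(x) = \iota_J(y)$ in $\injlim F$ (computed in $\*{Set}$) if and only if there is a morphism $p : I \to J$ in $\@L(\*I)$ with $x$ in the domain of $\bar F(p)$ and $\bar F(p)(x) = y$. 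The proof is a direct translation between the zigzags that generate the set-theoretic colimit and words in the generators $\eta(i)^{\pm1}$: a zigzag $x \to_{F(i_1)} \cdots \leftarrow_{F(i_{2n})} y$ corresponds to $p = \eta(i_{2n})^{-1} \circ \eta(i_{2n-1}) \circ \cdots \circ \eta(i_2)^{-1} \circ \eta(i_1)$, and because $\bar F$ is a functor into $\*{PInj}$, the step-by-step validity of the zigzag is exactly the assertion that $x$ lies in the domain of the partial injection $\bar F(p) = F(i_{2n})^{-1} \circ \cdots \circ F(i_1)$. That these conditions depend only on $p$, not on the chosen word, is automatic since $\bar F(p)$ is well-defined. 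I expect this bookkeeping of domains together with the word/zigzag dictionary to be the main obstacle, even though it is more or less forced by the construction of $\@L(\*I)$.

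Granting the lemma, (iii)$\Rightarrow$(i) is short: by \cref{thm:setinj-colim} it suffices to show each $\iota_I$ is injective, so suppose $\iota_I(x) = \iota_I(y)$ with $x, y \in F(I)$. The lemma supplies an endomorphism $p : I \to I$ in $\@L(\*I)$ with $\bar F(p)(x) = y$. Since $\*I$ is upward-simply-connected, $\@L(\*I)$ is idempotent, so $p$ is idempotent; hence $\bar F(p)$ is an idempotent partial injection, and any such is a partial identity (for $x$ in its domain, $\bar F(p)(\bar F(p)(x)) = \bar F(p)(x)$ together with injectivity of $\bar F(p)$ gives $\bar F(p)(x) = x$). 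Therefore $y = x$.

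Finally, for (ii)$\Rightarrow$(iii) I argue contrapositively. If $\@L(\*I)$ is not idempotent, choose an endomorphism $p : I \to I$ with $p \circ p \neq p$. Then $p^{-1} \circ p \neq p$, since otherwise $p$ would equal the idempotent $p^{-1} \circ p$ and hence be idempotent; thus $p^{-1} \circ p$ and $p$ are distinct elements of $\Hom_{\@L(\*I)}(I, I) \subseteq \Psi_{\@L(\*I)}(I)$. Writing $G = \Psi_{\@L(\*I)} \circ \eta$, so that $\bar G = \Psi_{\@L(\*I)}$, one computes that $p^{-1} \circ p$ lies in the domain of $\bar G(p)$ with $\bar G(p)(p^{-1} \circ p) = p \circ p^{-1} \circ p = p$; so the easy direction of the lemma identifies these two distinct elements in $\injlim G$. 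Therefore $\iota_I$ is not injective and, by \cref{thm:setinj-colim}, $G$ has no cocone in $\*{Inj}$, i.e.\ (ii) fails.
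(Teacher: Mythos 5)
Your proposal is correct and takes essentially the same approach as the paper: the same cycle (i)$\implies$(ii)$\implies$(iii)$\implies$(i), resting on \cref{thm:setinj-colim}, the extension of diagrams along $\eta_\*I$ to $\@L(\*I) \to \*{PInj}$, the fact that idempotent partial injections are partial identities, and the same witnessing pair (your $p$ and $p^{-1} \circ p$ are exactly the paper's $f \circ f^{-1} \circ f$ and $f^{-1} \circ f$, identified in $\injlim(\Psi_{\@L(\*I)} \circ \eta_\*I)$ via the word for $p$). The only difference is organizational: you isolate the zigzag/word dictionary as an explicit two-way lemma and prove (ii)$\implies$(iii) contrapositively, whereas the paper inlines the two directions of that dictionary as concrete computations inside the implications.
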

\begin{proof}
First we remark on why $\Psi_{\@L(\*I)} \circ \eta_\*I$ in (ii) lands in $\*{Inj}$.  This is because every morphism $i$ in $\*I$ becomes monic in $\@L(\*I)$, hence in $\*{PInj}$; and the monomorphisms in $\*{PInj}$ are precisely the total injections.

(i)$\implies$(ii) is obvious.

(ii)$\implies$(iii): Let $f : I --->[\@L(\*I)]{} I$ be an endomorphism.  To show that $f$ is idempotent, it suffices to show that $f \circ f^{-1} \circ f = f^{-1} \circ f$, since then $f \circ f = f \circ (f \circ f^{-1} \circ f) = f \circ (f^{-1} \circ f) = f$.  Since $\@L(\*I)$ is generated by the morphisms in $\*I$ and their pseudoinverses, we have
\begin{align*}
f = i_{2n}^{-1} \circ i_{2n-1} \circ \dotsb \circ i_3 \circ i_2^{-1} \circ i_1
\end{align*}
for some zigzag ``path'' in $\*I$
\begin{equation*}
\begin{tikzcd}[column sep=2em]
& I_1 && I_3 & ~ & I_{2n-1} \\
I = I_0 \urar{i_1} && I_2 \ular[swap]{i_2} \urar{i_3} && \uar[draw=none][pos=.5,anchor=center]{\textstyle\dotsm} && I_{2n} = I. \ular[swap]{i_{2n}}
\end{tikzcd}
\end{equation*}
This yields a zigzag (with some obvious abbreviations for clarity)
\begin{equation*}
\begin{tikzcd}[column sep=1em]
& i_1 f^{-1} f \mathrlap{{}\in \Psi(I_1)} &
& i_3 i_2^{-1} i_1 f^{-1} f \mathrlap{{}\in \Psi(I_3)} & ~
& i_{2n-1} \dotsm i_2^{-1} i_1 f^{-1} f \mathrlap{{}\in \Psi(I_{2n-1})} \\
f^{-1} f \mathrlap{{}\in \Psi(I)} \urar[mapsto]{i_1} &&
i_2^{-1} i_1 f^{-1} f \mathrlap{{}\in \Psi(I_2)} \ular[mapsto][swap]{i_2} \urar[mapsto]{i_3} &&
\hspace{2em} \uar[draw=none][pos=.5,anchor=center]{\textstyle\dotsm} &&
f f^{-1} f \in \Psi(I) \ular[mapsto][swap]{i_{2n}}
\end{tikzcd}
\end{equation*}
where the even-numbered mappings are by the following calculation:
\begin{align*}
i_{2k} (i_{2k}^{-1} i_{2k-1} \dotsm i_2^{-1} i_1 f^{-1} f)
&= (i_{2k} i_{2k}^{-1}) (i_{2k-1} \dotsm i_1 \overbrace{i_1^{-1} \dotsm i_{2k-1}^{-1}) i_{2k} \dotsm i_{2n}}^{f^{-1}} f \\
&= (i_{2k-1} \dotsm i_1 i_1^{-1} \dotsm i_{2k-1}^{-1}) (i_{2k} i_{2k}^{-1}) i_{2k} \dotsm i_{2n} f \\
&= (i_{2k-1} \dotsm i_1 i_1^{-1} \dotsm i_{2k-1}^{-1}) i_{2k} \dotsm i_{2n} f \\
&= i_{2k-1} \dotsm i_1 f^{-1} f.
\end{align*}
By the discussion following \cref{thm:setinj-colim}, this last zigzag implies that $f^{-1} f = f f^{-1} f$ in $\injlim (\Psi \circ \eta)$, whence by (ii) and \cref{thm:setinj-colim}, the same equality holds in $\Psi(I)$ and hence in $\@L(\*I)$, as desired.

(iii)$\implies$(i): Let $F : \*I -> \*{Inj}$ be a diagram.  By the universal property of $\@L(\*I)$, $F$ extends along $\eta$ to a functor $\~F : \@L(\*I) -> \*{PInj}$ such that $\~F \circ \eta = F : \*I -> \*{PInj}$.  This functor $\~F$ takes a pseudoinverse $i^{-1} : J --->[\@L(\*I)]{} I$, for $i : I --->[\*I]{} J$, to the (partial) inverse of $F(i)$, hence takes a ``path'' $f = i_{2n}^{-1} \circ i_{2n-1} \circ \dotsb \circ i_1 : I --->[\@L(\*I)]{} I$ to the partial injection $\~F(f) : F(I) --->[\*{PInj}]{} F(I)$ mapping $x \in F(I)$ to the endpoint $y$ of the zigzag in the remarks following \cref{thm:setinj-colim}.  Since every $f : I --->[\@L(\*I)]{} I$ is idempotent, so is every $\~F(f)$.  Since the idempotents in $\*{PInj}$ are precisely the partial identity functions, it follows (by the remarks following \cref{thm:setinj-colim}) that the canonical maps $\iota_I : F(I) -> \injlim F$ are injective.
\end{proof}

This proves (ii)$\iff$(iii) in \cref{thm:amalgam}, for arbitrary small $\*I$; the parenthetical in (ii) follows from

\begin{lemma}
Let $\*I$ be a finite category.  If a diagram $F : \*I -> \*{Inj}$ does not have a cocone, then some diagram $F'$ which is a pointwise restriction of $F$ to finite subsets also does not have a cocone.
\end{lemma}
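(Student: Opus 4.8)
The plan is to reduce the failure of amalgamation to a single finite witness and then truncate $F$ around it. By \cref{thm:setinj-colim}, $F$ fails to have a cocone precisely when some canonical map $\iota_I : F(I) \to \injlim F$ is non-injective, i.e., there are distinct $x, y \in F(I)$ with $\iota_I(x) = \iota_I(y)$. By the explicit description of $\injlim F$ in $\*{Set}$ recalled after \cref{thm:setinj-colim}, this means $x$ and $y$ are joined by a \emph{finite} zigzag $x = x_0, x_1, \dotsc, x_{2n} = y$ with $x_k \in F(I_k)$, connected by finitely many arrows $i_1, \dotsc, i_{2n}$ of $\*I$. So the non-existence of a cocone is already witnessed by finitely much data.

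The idea is then to let $F'$ be the smallest subfunctor of $F$ whose values contain the relevant zigzag elements. Concretely, I would take as ``seeds'' the finite sets $T_J := \{\, x_k : I_k = J \,\} \subseteq F(J)$ (nonempty for only finitely many objects $J$, each of them finite), and close up under the action of $\*I$ by setting
\[
F'(J) := \bigcup_{K,\; k : K \to J} F(k)(T_K),
\]
the union ranging over all objects $K$ and arrows $k : K \to J$ of $\*I$ (identities included, so $T_J \subseteq F'(J)$). A one-line use of functoriality shows this is closed under all arrows: for $a = F(k)(t) \in F'(I')$ and $i : I' \to J$ one has $F(i)(a) = F(i \circ k)(t) \in F'(J)$, since $i \circ k$ is again an arrow of $\*I$. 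Hence $F'(i) := F(i)|_{F'(I')}$ is well-defined, and $F'$ is a genuine subfunctor of $F$, i.e.\ an $\*I$-shaped diagram in $\*{Inj}$.

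The crucial point, and where finiteness of $\*I$ enters, is that each $F'(J)$ is a \emph{finite} set: it is a finite union, indexed by the finitely many arrows of $\*I$ with codomain $J$, of images of the finite seeds $T_K$. Thus $F'$ is genuinely a pointwise restriction of $F$ to finite subsets. Finally I would check that $F'$ still lacks a cocone: every zigzag element $x_k$ lies in $F'(I_k)$ and every connecting arrow $i_k$ is retained with $F'(i_k)$ equal to $F(i_k)$ restricted, so the same zigzag witnesses $\iota'_I(x) = \iota'_I(y)$ in $\injlim F'$, while $x \ne y$ still holds in $F'(I) \subseteq F(I)$. Hence $\iota'_I$ is non-injective, and $F'$ has no cocone by \cref{thm:setinj-colim} again.

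I do not expect a genuine obstacle here; the only subtlety is arranging $F'$ to be a \emph{bona fide} functor — closed under \emph{all} arrows of $\*I$, not merely those appearing in the zigzag — which is exactly what the closure step above secures, and which is precisely what forces the hypothesis that $\*I$ has only finitely many arrows in order to keep $F'$ valued in finite sets.
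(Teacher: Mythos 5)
Your proof is correct and is essentially the paper's own argument: the paper also extracts a finite zigzag witnessing that some $x \ne y \in F(I)$ are identified in $\injlim F$ and then sets $F'(J) := \{F(i)(x_k) \mid i : I_k \to J\}$, which is exactly your closure of the seed sets under all arrows of $\*I$. The only difference is that you spell out the verifications (functoriality of $F'$, pointwise finiteness from the finiteness of $\*I$, and the persistence of the zigzag in $\injlim F'$) that the paper leaves implicit.
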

\begin{proof}
If $F$ does not have a cocone, then there is some $I \in \*I$ and $x \ne y \in F(I)$ which are identified in $\injlim F$.  Take a zigzag $x = x_0 |-> x_1 <-| x_2 |-> \dotsb <-| x_{2n} = y$ witnessing that $x, y$ are identified in $\injlim F$, as in the remarks following \cref{thm:setinj-colim}, where $x_k \in F(I_k)$.  Put $F'(J) := \{F(i)(x_k) \mid i : I_k --->[\*I]{} J\}$.
\end{proof}

\section{Posetal diagrams}
\label{sec:posetal}

In this section, we prove that (iii)$\implies$(i) in \cref{thm:amalgam} for finite posets $\*I$.  To do so, we first examine the structure of upward-simply-connected posets; this will lead to the conditions (iv) and (v) in \cref{thm:amalgam}.

\begin{lemma}
\label{thm:L-cosieve-faithful}
Let $\*I$ be any category and $\*J \subseteq \*I$ be an upward-closed subcategory (or \defn{cosieve}), i.e., a full subcategory such that if $I \in \*J$ and $i : I --->[\*I]{} J$ then $i, J \in \*J$.  Then the canonical induced functor $\@L(\*J) -> \@L(\*I)$ is faithful.
\end{lemma}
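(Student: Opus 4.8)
The plan is to reduce the faithfulness of the canonical functor $\iota : \@L(\*J) -> \@L(\*I)$ to the faithfulness of the representation $\Psi_{\@L(\*J)} : \@L(\*J) -> \*{PInj}$ supplied by \cref{thm:invcat-yoneda}. Concretely, I would construct a functor $\~G : \@L(\*I) -> \*{PInj}$ whose restriction along $\iota$ recovers $\Psi_{\@L(\*J)}$, i.e.\ $\~G \circ \iota = \Psi_{\@L(\*J)}$. Since $\Psi_{\@L(\*J)}$ is an embedding, hence faithful, the equation $\~G \circ \iota = \Psi_{\@L(\*J)}$ forces $\iota$ to be faithful as well. So the whole problem becomes: build such a $\~G$.

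To construct $\~G$, I would invoke the universal property of $\@L(\*I)$: it suffices to give a functor $G : \*I -> \*{PInj}$ sending every morphism of $\*I$ to a monomorphism (equivalently, satisfying $G(i)^{-1} \circ G(i) = 1$), and then take $\~G$ to be its canonical extension, so that $\~G \circ \eta_\*I = G$. I define $G$ on $\*J$ to be $\Psi_{\@L(\*J)} \circ \eta_\*J$, which indeed sends morphisms to total injections (by the same argument noted for the analogous map in the proof of \cref{thm:setinj-amalgam}: $\*J$-morphisms become monic in $\@L(\*J)$, hence in $\*{PInj}$, and monos in $\*{PInj}$ are total injections). On every object $I \in \*I \setminus \*J$ I set $G(I) := \emptyset$, sending each morphism out of such an $I$ to the (vacuously total, hence monic) empty partial injection.

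The crucial point, and the only place the cosieve hypothesis is used, is functoriality of $G$. Because $\*J$ is upward-closed, there are no morphisms from an object of $\*J$ to an object outside $\*J$; hence any composable pair in $\*I$ either lies entirely within $\*J$, where $G$ agrees with the honest functor $\Psi_{\@L(\*J)} \circ \eta_\*J$, or has domain outside $\*J$, where the source $\emptyset$ forces both sides of the functoriality equation to be the unique empty partial injection. This is exactly the step I expect to be the heart of the argument: the observation that ``collapsing everything outside $\*J$ to $\emptyset$'' is legitimate precisely for a cosieve, and would break down for an arbitrary subcategory or for a downward-closed sieve (where morphisms could leave $\*J$ and map an $\emptyset$-valued source into a nonempty target, violating functoriality against the $\*J$-behaviour).

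Finally, I would verify $\~G \circ \iota = \Psi_{\@L(\*J)}$ by comparing the two functors on the generators of $\@L(\*J)$. Using $\iota \circ \eta_\*J = \eta_\*I|_\*J$ and $\~G \circ \eta_\*I = G$, both functors agree on every object $J \in \*J$ and on each generating morphism $\eta_\*J(j)$ by construction; since any functor between inverse categories automatically preserves pseudoinverses (by uniqueness of the pseudoinverse), agreement on the generators $\eta_\*J(j)$ and their inverses propagates to all of $\@L(\*J)$. Faithfulness of $\iota$ is then immediate from faithfulness of $\Psi_{\@L(\*J)}$. The remaining obligations, namely the well-definedness of $G$ as a functor into $\*{PInj}$ and the generator-wise check of $\~G \circ \iota = \Psi_{\@L(\*J)}$, are routine.
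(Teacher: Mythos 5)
Your proof is correct, and its skeleton coincides with the paper's: build a functor on $\*I$ that agrees on $\*J$ with something which is faithful after extension to $\@L(\*J)$, and which ``kills'' everything outside $\*J$ (this being functorial precisely because $\*J$ is a cosieve); extend it along $\eta_\*I$ by the universal property of $\@L(\*I)$; then faithfulness of $\@L(\*J) \to \@L(\*I)$ follows because composing it with the extension yields a faithful functor. The difference is where you land. The paper manufactures an abstract target: the inverse category $\*K$ obtained from $\@L(\*J)$ by adding the objects of $\*I \setminus \*J$ and freely adjoining zero morphisms (with the zero endomorphism of an outside object declared to be its identity), and the faithful functor being factored through is the inclusion $\@L(\*J) \subseteq \*K$. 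You land instead in $\*{PInj}$, with $\emptyset$ and the empty partial injections playing the role of the zero object and zero morphisms, and with the Wagner--Preston embedding $\Psi_{\@L(\*J)}$ of \cref{thm:invcat-yoneda} in place of the inclusion; your observation that the empty map out of $\emptyset$ is vacuously a split mono is exactly the paper's stipulation that zero endomorphisms of outside objects are identities. Your route buys concreteness: nothing needs to be verified about $\*{PInj}$, whereas the paper implicitly owes a check that freely adjoining zeros really yields an inverse category containing $\@L(\*J)$. The paper's route buys generality and self-containedness: it invokes no representation theorem, and it applies verbatim to the lemma's ``any category,'' whereas \cref{thm:invcat-yoneda} is stated only for \emph{small} inverse categories, so your argument as written needs $\@L(\*J)$ to be small (harmless in all of the paper's applications, and consistent with its declared policy of ignoring size issues). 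One small slip in an aside: for a downward-closed sieve the construction fails because a morphism \emph{leaving} $\*J$ would have to send a possibly nonempty $G$-value into $\emptyset$ --- not an $\emptyset$-valued source into a nonempty target, as you wrote --- and hence could not be total, let alone monic.
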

\begin{proof}
Let $\*K$ be the inverse category obtained by taking $\@L(\*J)$, adding the objects in $\*I \setminus \*J$, and freely adjoining zero morphisms between every pair of objects (taking the zero morphism from an object in $\*I \setminus \*J$ to itself as the identity).  Let $F : \*I -> \*K$ send morphisms $i$ in $\*J$ to $\eta_\*J(i)$ and all other morphisms to $0$.  Then $F$ sends all morphisms to monomorphisms, hence extends along $\eta_\*I : \*I -> \@L(\*I)$ to $\~F : \@L(\*I) -> \*K$.
\begin{equation*}
\begin{tikzcd}
\*J \dar[swap]{\eta_\*J} \rar[hook] & \*I \dar[swap]{\eta_\*I} \ar{ddr}{F} \\
\@L(\*J) \ar[hook,bend right=15]{drr} \rar & \@L(\*I) \drar[dashed][swap,pos=.2]{\~F} \\
&& \*K
\end{tikzcd}
\end{equation*}
The composite $\@L(\*J) -> \@L(\*I) --->{\~F} \*K$ is equal to the inclusion $\@L(\*J) \subseteq \*K$, because it takes morphisms $\eta_\*J(i)$ for $i$ in $\*J$ to $F(i) = \eta_\*J(i)$.  It follows that $\@L(\*J) -> \@L(\*I)$ must be faithful, as desired.
\end{proof}

\begin{corollary}
\label{thm:usc-cosieve-sc}
If $\*I$ is upward-simply-connected, then every upward-closed subcategory $\*J \subseteq \*I$ is simply-connected.
\end{corollary}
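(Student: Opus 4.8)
The plan is to deduce this corollary directly from the two preceding results, \cref{thm:usc-sc} and \cref{thm:L-cosieve-faithful}, using a single elementary observation about faithful functors. The key point is that a faithful functor reflects any equation between parallel morphisms; in particular, if $G : \*K \to \*L$ is faithful and $f : X \to X$ satisfies $G(f) \circ G(f) = G(f)$, then $G(f \circ f) = G(f)$ forces $f \circ f = f$. Thus faithfulness transfers idempotency of endomorphisms backwards along $G$.

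First I would fix an upward-closed subcategory $\*J \subseteq \*I$ and invoke \cref{thm:L-cosieve-faithful}, which tells us that the canonical functor $\@L(\*J) \to \@L(\*I)$ is faithful. Since $\*I$ is upward-simply-connected, $\@L(\*I)$ is idempotent by \cref{defn:L}. Given any endomorphism $f$ in $\@L(\*J)$, its image in $\@L(\*I)$ is an endomorphism, hence idempotent, so by the observation above $f$ itself is idempotent. Therefore $\@L(\*J)$ is idempotent, i.e., $\*J$ is upward-simply-connected as well.

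Finally, I would apply \cref{thm:usc-sc} to $\*J$ to conclude that $\*J$ is simply-connected, which is exactly the claim. In fact this argument establishes the formally stronger statement that every cosieve of an upward-simply-connected category is again upward-simply-connected, from which simple-connectedness follows for free.

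The only step requiring any thought is the reflection of idempotency along a faithful functor, and even that is immediate once it is phrased as the reflection of the single equation $f \circ f = f$; so no genuine obstacle arises here. The substantive work has already been carried out in \cref{thm:L-cosieve-faithful}, whose construction of the auxiliary inverse category $\*K$ (with freely adjoined zero morphisms outside $\*J$) is what actually relates $\@L(\*J)$ to $\@L(\*I)$.
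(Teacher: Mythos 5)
Your proof is correct and is exactly the argument the paper intends: its one-line proof (``By \cref{thm:L-cosieve-faithful} and \cref{thm:usc-sc}'') implicitly relies on precisely your observation that the faithful functor $\@L(\*J) \to \@L(\*I)$ reflects the equation $f \circ f = f$, so that $\*J$ is itself upward-simply-connected, after which \cref{thm:usc-sc} applies. You have merely made explicit (including the stronger intermediate claim that cosieves inherit upward-simple-connectedness) what the paper leaves to the reader.
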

\begin{proof}
By \cref{thm:L-cosieve-faithful} and \cref{thm:usc-sc}.
\end{proof}

We will show that for finite posets $\*I$, the two conditions in \cref{thm:usc-cosieve-sc} are equivalent to each other and to the following combinatorial notion:

\begin{definition}
\label{defn:tree}
The class of finite \defn{tree-like} posets is defined inductively by the following rule:
\begin{enumerate}
\item[($*$)]  if $\*K_1, \dotsc, \*K_n$ are finite tree-like posets, and $\*U_k \subseteq \*K_k$ is a connected upward-closed subset for each $k$, then a new finite tree-like poset is formed by taking the disjoint union of the $\*K_k$'s and adjoining a single point which lies below each $\*U_k$.
\end{enumerate}
A finite \defn{forest-like} poset is a finite disjoint union of tree-like posets.
\end{definition}

\begin{proposition}
\label{thm:posetal-cosieve-sc-decomp}
Let $\*I$ be a finite connected simply-connected poset and $I \in \*I$ be minimal.  Then for each connected component $\*K \in \pi_0(\*I \setminus \{I\})$, the subposet $\*K \cap \up I$ (where $\up I := \{J \in \*I \mid J \ge I\}$) is connected.
\end{proposition}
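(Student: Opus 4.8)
The plan is to argue the contrapositive: assuming some component $\*K\in\pi_0(\*I\setminus\{I\})$ has $\*K\cap\up I$ disconnected, I will exhibit a nonzero class in the first (reduced simplicial) homology $\widetilde H_1(\*I)$ of the nerve of \cref{rmk:nerve}, which is impossible once $\*I$ is connected and simply-connected (a trivial fundamental group forces $\widetilde H_1(\*I)\cong\pi_1(\*I)^{\mathrm{ab}}=0$ by Hurewicz). First I would record that $\*K\cap\up I$ is nonempty: since $\*I$ is connected but $\*K$ is a whole connected component of $\*I\setminus\{I\}$, there must be a comparability between $I$ and some $J\in\*K$, and as $I$ is minimal this forces $I\le J$, i.e.\ $J\in\*K\cap\up I$. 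Under the disconnectedness hypothesis I may then pick two distinct components $C_A,C_B\in\pi_0(\*K\cap\up I)$, say with $A\in C_A$ and $B\in C_B$.

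The core is a Mayer--Vietoris computation along two overlapping cosieves. Set $\*A:=\up I$, $\*B:=\*I\setminus\{I\}$, and $\*D:=\*A\cap\*B=\up I\setminus\{I\}$; both $\*A$ and $\*B$ are upward-closed, and since $I$ is minimal every chain either contains $I$ (so lies entirely in $\up I$) or avoids $I$ (so lies in $\*B$), whence the nerves satisfy $\*A\cup\*B=\*I$ with intersection $\*D$. The nerve of $\*A=\up I$ is a cone with apex $I$, hence contractible, so $\widetilde H_\ast(\*A)=0$. The reduced Mayer--Vietoris sequence therefore collapses to an exact segment
\begin{equation*}
\widetilde H_1(\*B)\oplus 0\longrightarrow \widetilde H_1(\*I)\longrightarrow \widetilde H_0(\*D)\xrightarrow{\ j_\ast\ }\widetilde H_0(\*B),
\end{equation*}
where $j:\*D\hookrightarrow\*B$ is the inclusion and $j_\ast$ sends the class of a component of $\*D$ to the class of the component of $\*B$ containing it. Now $[C_A]-[C_B]$ is a nonzero element of $\widetilde H_0(\*D)$ (as $C_A\ne C_B$), yet both $C_A$ and $C_B$ sit inside the single component $\*K$ of $\*B$, so $j_\ast([C_A]-[C_B])=[\*K]-[\*K]=0$. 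Thus $\ker j_\ast\ne 0$, and by exactness the image of $\widetilde H_1(\*I)\to\widetilde H_0(\*D)$ is nonzero, so $\widetilde H_1(\*I)\ne 0$ — contradicting simple-connectedness and completing the argument.

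The step I expect to be the real obstacle is recognizing that a purely \emph{local} witness will not suffice, so that a global invariant like $\widetilde H_1$ is genuinely needed. Indeed, a disconnection of $\*K\cap\up I$ does produce a ``bowtie'' $I<A$, $I<B$, $M<A$, $M<B$ (with $M\notin\up I$ an element of a descending run of the connecting zigzag in $\*K$), but \cref{ex:boat} shows that such a local bowtie loop can become null-homotopic in the ambient $\*I$; so one cannot simply point to a nontrivial loop in a subposet. The homology computation sidesteps this by measuring the defect globally, through the failure of $j_\ast$ to be injective on $\pi_0$. The only facts I need to justify carefully are the cone (hence contractibility) of the nerve of $\up I$ at the minimal vertex $I$ and the identification of the connecting map with the $\pi_0$-level inclusion $j_\ast$; both are standard for order complexes. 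One could alternatively run the same bookkeeping through the groupoid van Kampen theorem to detect a nontrivial element of $\pi_1(\*I)$ directly, but the $\widetilde H_0$-kernel computation is the cleanest way to certify non-triviality.
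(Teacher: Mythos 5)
Your proof is correct, but it takes a genuinely different route from the paper's: it is exactly the ``topological argument'' via the nerve that the paper mentions and then deliberately sidesteps ``in the interest of keeping this paper self-contained.'' The paper's own proof is elementary: it collapses $\*I$ onto the partition poset $\@P := \{\*I \setminus \*K\} \cup \pi_0(\*K \cap \up I) \cup \pi_0(\*K \setminus \up I)$, checks that the quotient map induces a \emph{full} functor $\pi_1(\*I) \to \pi_1(\@P)$ (using connectedness of each block), so $\@P$ inherits simple-connectedness; since $\@P$ has no chains of cardinality $> 2$, $\pi_1(\@P)$ is the graph-theoretic fundamental groupoid of its Hasse diagram, and acyclicity of that graph forces $|\pi_0(\*K \cap \up I)| = 1$. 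Both arguments exploit the same decomposition --- the cone $\up I$ and the complement $\*I \setminus \{I\}$, meeting in $\up I \setminus \{I\}$ --- but yours routes it through standard algebraic topology (simplicial Mayer--Vietoris for a union of two subcomplexes, contractibility of cones, Hurewicz, and the identification of the categorical $\pi_1$ with that of the nerve from \cref{rmk:nerve}), buying concision at the price of external machinery, whereas the paper's proof never leaves the free-groupoid definition of $\pi_1$ and uses only elementary graph reasoning; your preliminary nonemptiness step corresponds to the paper's remark that $m \ge 1$ since $\*K \ne \emptyset$ and $\*I$ is connected. Two minor points you should make explicit, neither a gap of substance: (1) $C_A$ and $C_B$ are distinct components of $\*D = \up I \setminus \{I\}$ itself, not merely of $\*K \cap \up I$ --- immediate, since any zigzag in $\*D$ starting in $\*K$ is in particular a zigzag in $\*I \setminus \{I\}$ and so stays inside $\*K$, hence inside $\*K \cap \up I$; (2) $\*D \neq \emptyset$ (which follows from your nonemptiness step), so the reduced Mayer--Vietoris sequence is legitimately applied.
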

\begin{proof}
This is straightforward to show using topological arguments, by considering the nerve of $\*I$ as in \cref{rmk:nerve}; however, in the interest of keeping this paper self-contained, we will give a more elementary proof.

Let $\pi_0(\*K \cap \up I) = \{\*U_1, \dotsc, \*U_m\}$ and $\pi_0(\*K \setminus \up I) = \{\*V_1, \dotsc, \*V_n\}$.  Note that we must have $m \ge 1$, since $\*K \ne \emptyset$ and $\*I$ is connected.  Consider the partition of connected subposets
\begin{align*}
\@P := \{\*I \setminus \*K\} \cup \pi_0(\*K \cap \up I) \cup \pi_0(\*K \setminus \up I)
\end{align*}
of $\*I$, ordered by $\*A \le \*B \iff \exists A \in \*A,\, B \in \*B.\, A \le B$ for $\*A, \*B \in \@P$.  Then $\@P$ is connected (because $\*I$ is), $\pi_0(\*K \cap \up I) \cup \pi_0(\*K \setminus \up I) \subseteq \@P$ is connected (because $\*K$ is), and we have two antichains $\{\*I \setminus \*K\} \cup \pi_0(\*K \setminus \up I)$ and $\pi_0(\*K \cap \up I)$, with only elements of the former below elements of the latter (and $\*I \setminus \*K$ below every element of the latter).  So the Hasse diagram of $\@P$ looks like
\begin{equation*}
\begin{tikzcd}[every arrow/.append style={-,crossing over clearance=.7ex}]
& \*U_1 \dar & \*U_2 \dlar \dar \drar & \*U_3 \dar \ar{drr} & \dotsb & \*U_m \ar{dll} \\
\*I \setminus \*K \urar \ar[crossing over]{urr} \ar[crossing over]{urrr} \ar[crossing over]{urrrrr} &
\*V_1 & \*V_2 & \*V_3 & \dotsb & \*V_n
\end{tikzcd}
\end{equation*}
Using that each element of $\@P$ is connected, it is easy to check that the functor $\pi_1(\*I) -> \pi_1(\@P)$ induced by the quotient map $\*I -> \@P$ is full.  So since $\*I$ is simply-connected, so must be $\@P$.  But since $\@P$ has no chains of cardinality $> 2$, $\pi_1(\@P)$ is just the graph-theoretic fundamental groupoid of its Hasse diagram (depicted above).  So for $\@P$ to be simply-connected, its Hasse diagram must be acyclic, which clearly implies $|\pi_0(\*K \cap \up I)| = m = 1$, as desired.
%$\@P$ is freely generated by a (di)graph (namely its Hasse diagram, depicted above), so $\pi_1(\@P)$ is just a graph-theoretic fundamental groupoid.  It follows from standard topological arguments, using that $\pi_0(\*K \cap \up I) \cup \pi_0(\*K \setminus \up I) \subseteq \@P$ is connected and that $\*I \setminus \*K < \*U_k$ for each $k$, that for $\@P$ to be simply-connected, we must have $|\pi_0(\*K \cap \up I)| = m = 1$, as desired.
\end{proof}

\begin{corollary}
\label{thm:cosieve-sc-tree}
Let $\*I$ be a finite (connected) poset such that every upward-closed subset is simply-connected.  Then $\*I$ is forest-like (tree-like).  \qed
\end{corollary}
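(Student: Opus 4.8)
The plan is to prove both statements by induction on $|\*I|$, treating the tree-like (connected) case as the main case and deducing the forest-like case from it. For the forest-like statement I would decompose $\*I$ into its connected components $\*C_1, \dotsc, \*C_r$; since each component is both upward- and downward-closed, every upward-closed subset of $\*C_j$ is also upward-closed in $\*I$ and hence simply-connected, so each $\*C_j$ satisfies the hypothesis of the connected case and is therefore tree-like, making $\*I$ forest-like by definition. It thus remains to handle the connected case.

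So assume $\*I$ is connected and proceed by induction on $|\*I|$, the base case $|\*I| = 1$ being the one-point poset, which is tree-like via the empty instance ($n = 0$) of rule ($*$) in \cref{defn:tree}. For $|\*I| > 1$, choose a minimal element $I \in \*I$ and let $\pi_0(\*I \setminus \{I\}) = \{\*K_1, \dotsc, \*K_n\}$. Setting $\*U_k := \*K_k \cap \up I$, the key input is \cref{thm:posetal-cosieve-sc-decomp}, which applies since $\*I$ --- being upward-closed in itself --- is simply-connected: it guarantees that each $\*U_k$ is connected (in particular nonempty). Each $\*U_k$ is moreover upward-closed in $\*K_k$, and because $I$ is minimal it lies below exactly the elements of $\up I \setminus \{I\} = \bigcup_k \*U_k$, while there are no order relations between distinct $\*K_k$'s. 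Hence $\*I$ is obtained from the $\*K_k$'s precisely by the construction of rule ($*$), adjoining $I$ below each $\*U_k$.

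To close the induction I must verify that each $\*K_k$ again satisfies the hypothesis. The crucial observation is that an upward-closed subset $\*W \subseteq \*K_k$ is automatically upward-closed in $\*I$: if $J \in \*W$ and $J \le J'$ in $\*I$, then minimality of $I$ forces $J' \ne I$, so $J'$ lies in the same component of $\*I \setminus \{I\}$ as $J$, i.e., $J' \in \*K_k$, whence $J' \in \*W$. Thus every upward-closed subset of $\*K_k$ is simply-connected, so by the inductive hypothesis each (connected) $\*K_k$ is tree-like, and applying rule ($*$) then shows $\*I$ is tree-like. The only step requiring real care --- and the main obstacle --- is precisely this inheritance of the hypothesis to the components $\*K_k$, together with the bookkeeping that the resulting poset is literally the one produced by rule ($*$); the genuinely topological content (connectedness of $\*K_k \cap \up I$) has already been isolated in \cref{thm:posetal-cosieve-sc-decomp}.
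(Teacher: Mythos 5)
Your proof is correct and matches the paper's intended argument: the paper states this corollary with no written proof precisely because it follows from \cref{thm:posetal-cosieve-sc-decomp} by the induction you spell out --- remove a minimal element, observe that upward-closed subsets of the components of the remainder are upward-closed in the whole poset, and apply rule ($*$) --- which is also exactly the recursion used in the paper's decidability procedure. You simply make explicit the bookkeeping (inheritance of the hypothesis to components, and that the poset is literally the one produced by rule ($*$)) that the paper leaves to the reader.
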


\begin{proposition}
\label{thm:amalgam-tree}
Let $\*I$ be a finite tree-like poset.  Then for every category $\*C$ with the AP, every diagram $F : \*I -> \*C$ has a cocone.
\end{proposition}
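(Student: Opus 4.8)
The plan is to induct on the inductive construction of tree-like posets in \cref{defn:tree}. Suppose $\*I$ is obtained by rule ($*$) from tree-like posets $\*K_1, \dotsc, \*K_n$ with connected upward-closed subsets $\*U_k \subseteq \*K_k$, together with a new point $p$ lying below every element of each $\*U_k$. In the base case $n = 0$, the poset $\*I$ is a single point and the cocone $(F(p), 1_{F(p)})$ works. For the inductive step, fix a diagram $F : \*I \to \*C$. Since $\*I$ is the disjoint union of the $\*K_k$ together with $p$, and the only nonidentity morphisms out of $p$ go into $\bigcup_k \*U_k$ (because each $\*U_k$ is upward-closed, so anything above an element of $\*U_k$ is again in $\*U_k$), the data of $F$ decomposes as the restrictions $F_k := F|_{\*K_k}$ together with the object $F(p)$ and the structure morphisms $F(p \to U) : F(p) \to F(U)$ for $U \in \*U_k$.

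By the induction hypothesis, each $F_k$ has a cocone $(X_k, (f^k_I)_{I \in \*K_k})$ in $\*C$. I then want to assemble, for each $k$, a single ``leg'' $g_k : F(p) \to X_k$ recording how $F(p)$ maps into $X_k$: for $U \in \*U_k$ set $g_k := f^k_U \circ F(p \to U)$. The key point is that this is independent of the choice of $U$. If $U \le U'$ in $\*U_k$, then the cocone identity $f^k_U = f^k_{U'} \circ F(U \to U')$ together with functoriality $F(p \to U') = F(U \to U') \circ F(p \to U)$ gives $f^k_{U'} \circ F(p \to U') = f^k_U \circ F(p \to U)$, so the composite is constant along each comparability; since $\*U_k$ is connected, any two of its elements are linked by a zigzag of comparabilities within $\*U_k$, along which the composite is preserved (and $p$ lies below every element encountered, so each $F(p \to U)$ is defined). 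Hence $g_k$ is well defined. This is exactly where the connectedness hypothesis on $\*U_k$ in ($*$) is used, and I expect it to be the crux of the argument.

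Finally I amalgamate the cocones $X_1, \dotsc, X_n$ over the common object $F(p)$ using only the AP. Concretely, iterate (vacuously when $n \le 1$, in which case take $X := X_1$ and $f_p := g_1$): amalgamate $X_1$ and $X_2$ along the span $X_1 \xleftarrow{g_1} F(p) \xrightarrow{g_2} X_2$ to get, by the AP, an object $Y_2$ receiving compatible maps from $X_1$ and $X_2$ and hence a single map $F(p) \to Y_2$; then amalgamate $Y_2$ with $X_3$ over $F(p)$, and so on, ending at an object $X$ equipped with maps $e_k : X_k \to X$ all satisfying $e_k \circ g_k = f_p$ for a common map $f_p : F(p) \to X$. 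Setting $f_I := e_k \circ f^k_I$ for $I \in \*K_k$ then yields the desired cocone: morphisms inside each $\*K_k$ are handled because $(X_k, (f^k_I)_I)$ is already a cocone and $e_k$ is post-composed uniformly, while for a morphism $p \to U$ with $U \in \*U_k$ we get $f_U \circ F(p \to U) = e_k \circ f^k_U \circ F(p \to U) = e_k \circ g_k = f_p$. Note that the JEP is never needed, since every pushout performed has apex $F(p)$. The only remaining bookkeeping is to check that the maps $F(p) \to X$ produced at each amalgamation step genuinely agree, which follows by induction on the iteration from the commuting triangles built into each pushout cocone.
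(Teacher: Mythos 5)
Your proof is correct and takes essentially the same route as the paper's: induction on the tree-like construction, cocones over each $F|\*K_k$ from the inductive hypothesis, iterated AP applied to the star of maps out of $F(p)$, and the connectedness of $\*U_k$ exploited via a zigzag of comparabilities. The only difference is organizational: you run the zigzag computation up front to show that the leg $g_k \colon F(p) \to X_k$ is independent of the choice of $U \in \*U_k$, whereas the paper fixes one representative $U_k$, amalgamates, and then runs the same computation afterwards to verify the cocone identities.
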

\begin{proof}
By induction on the construction of $\*I$, there is $I \in \*I$ minimal such that $\*I \setminus \{I\}$ is the disjoint union of tree-like posets $\*K_1, \dotsc, \*K_n$ and $\*U_k := \*K_k \cap \up I$ is connected for each $k$.  For each $k$, there is some $U_k \in \*U_k$ since $\*U_k$ is connected, and there is a cocone $(X_k, (f^k_K)_{K \in \*K_k})$ over $F|\*K_k$ by the induction hypothesis.  By the AP in $\*C$, there is a cocone $(Y, (g_I, g_k)_k)$ over the pushout diagram consisting of the composite maps
\begin{align*}
F(I) --->[\*C]{F(I, U_k)} F(U_k) --->[\*C]{f^k_{U_k}} X_k
\end{align*}
(where $F(I, U_k)$ denotes $F$ applied to the unique morphism $I --->[\*I]{} U_k$) for all $k$, where $g_I : F(I) --->[\*C]{} Y$ and $g_k : X_k --->[\*C]{} Y$ with
\begin{align*}
g_k \circ f^k_{U_k} \circ F(I, U_k) &= g_I.
\end{align*}
For $K \in \*K_k$, let $g_K : F(K) -> Y$ be the composite
\begin{align*}
F(K) --->{f^k_K} X_k --->{g_k} Y.
\end{align*}
We claim that $(Y, (g_J)_{J \in \*I})$ is a cocone over $F$.  For $K < K'$ where $K \in \*K_k$ for some $k$, also $K' \in \*K_k$, whence $g_K = g_k \circ f^k_K = g_k \circ f^k_{K'} \circ F(K, K') = g_{K'} \circ F(K, K')$ because $(X_k, (f^k_K)_{K \in \*K_k})$ is a cocone over $F|\*K_k$.  So we only need to check that for $I < K \in \*K_k$, i.e., $K \in \*U_k$, we have $g_I = g_K \circ F(I, K)$.  By connectedness of $\*U_k$, there is a path $K = K_0 \le K_1 \ge K_2 \le \dotsb \ge K_{2n} = U_k$ in $\*U_k$, whence
\begin{align*}
g_K \circ F(I, K)
&= g_k \circ f^k_{K_0} \circ F(I, K_0) \\
&= g_k \circ f^k_{K_1} \circ F(K_0, K_1) \circ F(I, K_0)
= g_k \circ f^k_{K_1} \circ F(I, K_1) \\
&= g_k \circ f^k_{K_1} \circ F(K_2, K_1) \circ F(I, K_2)
= g_k \circ f^k_{K_2} \circ F(I, K_2) \\
&= \dotsb \\
&= g_k \circ f^k_{K_{2n}} \circ F(I, K_{2n})
= g_k \circ f^k_{U_k} \circ F(I, U_k) = g_I.
\qedhere
\end{align*}
\end{proof}

\begin{corollary}
\label{thm:amalgam-forest}
Let $\*I$ be a finite forest-like poset.  Then for every category $\*C$ with the AP and the JEP, every diagram $F : \*I -> \*C$ has a cocone.
\end{corollary}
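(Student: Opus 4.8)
The plan is to reduce the forest-like case to the tree-like case already settled in \cref{thm:amalgam-tree}, using the JEP precisely to glue together the cocones produced on each component. By the definition of forest-like (\cref{defn:tree}), I would first write $\*I$ as a disjoint union of finitely many tree-like posets, $\*I = \*T_1 \sqcup \dotsb \sqcup \*T_m$, and decompose the given diagram into its restrictions $F|\*T_j : \*T_j -> \*C$. Since $\*C$ has the AP, \cref{thm:amalgam-tree} supplies, for each $j$, a cocone $(Y_j, (g^j_K)_{K \in \*T_j})$ over $F|\*T_j$.

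Next I would invoke the JEP to amalgamate the apexes. Applying it to the finite discrete diagram on the objects $Y_1, \dotsc, Y_m$ yields a cocone $(Z, (h_j : Y_j --->[\*C]{} Z)_j)$; as the diagram is discrete, this is just an object $Z$ together with morphisms $h_j$, subject to no commutativity conditions. I then define $g_K := h_j \circ g^j_K : F(K) -> Z$ for each object $K$ lying in the component $\*T_j$.

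It remains to verify that $(Z, (g_K)_{K \in \*I})$ is a cocone over $F$, and here the key simplification is that $\*I$ is a disjoint union: every morphism of $\*I$ has both endpoints in a single component $\*T_j$. Thus the cocone condition need only be checked for a morphism $i : K -> K'$ internal to some fixed $\*T_j$, where it reads $g_K = g_{K'} \circ F(i)$; this follows at once by post-composing with $h_j$ the identity $g^j_K = g^j_{K'} \circ F(i)$, which holds because $(Y_j, (g^j_K)_K)$ is a cocone over $F|\*T_j$. Hence $(Z, (g_K))$ is a cocone over $F$, as required.

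I do not anticipate a genuine obstacle: the argument is a routine gluing, and the main content has already been absorbed into \cref{thm:amalgam-tree}. The only points demanding care are the separate roles of the two hypotheses and one degenerate case. The AP enters solely through the tree-like components, whereas the JEP is exactly what allows the individual component-cocones to be combined, since without it the apexes $Y_j$ could not in general be mapped jointly into a common object. The degenerate case is $m = 0$, i.e.\ $\*I = \emptyset$, for which a cocone over $F$ is simply an arbitrary object of $\*C$; such an object exists because the JEP (in its empty-coproduct instance) forces $\*C$ to be nonempty. So the statement holds in every case.
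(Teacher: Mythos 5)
Your proposal is correct and is essentially the paper's own proof, which reads in full: ``Find cocones over $F$ restricted to each connected component of $\*I$, then apply JEP.'' You have simply spelled out the routine verification (and the degenerate empty case), so there is nothing to add.
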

\begin{proof}
Find cocones over $F$ restricted to each connected component of $\*I$, then apply JEP.
\end{proof}

\begin{corollary}
\label{thm:usc-etc}
A finite poset $\*I$ is upward-simply-connected, iff every upward-closed subset $\*J \subseteq \*I$ is simply-connected, iff $\*I$ is forest-like.
\end{corollary}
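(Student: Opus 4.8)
The plan is to prove the cycle of implications among the three stated conditions, which I label (A) $\*I$ is upward-simply-connected, (B) every upward-closed subset of $\*I$ is simply-connected, and (C) $\*I$ is forest-like. Two of the three arrows are already available from the material above, so the only genuinely new content lies in closing the loop with $(C)\implies(A)$.

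For $(A)\implies(B)$: since $\*I$ is a poset, an upward-closed subset is exactly an upward-closed full subcategory, i.e.\ a cosieve, so this is immediately \cref{thm:usc-cosieve-sc}. For $(B)\implies(C)$: this is precisely \cref{thm:cosieve-sc-tree}, whose statement (via the parenthetical ``forest-like (tree-like)'') already covers the possibly-disconnected case, so no extra work with connected components is needed.

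For $(C)\implies(A)$, I would feed a forest-like $\*I$ into \cref{thm:amalgam-forest}, which guarantees that every $\*I$-shaped diagram in any category with the AP and the JEP has a cocone. The key observation is that $\*{Inj}$ is such a category: it has the JEP because the coprojections into a disjoint union are injective, and it has the AP because the pushout in $\*{Set}$ of two injections with a common domain again consists of injections. Hence every diagram $F:\*I\to\*{Inj}$ has a cocone, which is exactly condition (i) of \cref{thm:setinj-amalgam}; invoking the equivalence (i)$\iff$(iii) there (and noting a finite poset is small) yields that $\*I$ is upward-simply-connected.

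The main obstacle is essentially cosmetic: the one input not literally stated earlier is that $\*{Inj}$ enjoys the AP and the JEP, which I expect to dispatch in a sentence as above. Everything else is a direct appeal to previously proved statements, so the proof is really a bookkeeping assembly of \cref{thm:usc-cosieve-sc}, \cref{thm:cosieve-sc-tree}, \cref{thm:amalgam-forest}, and \cref{thm:setinj-amalgam}.
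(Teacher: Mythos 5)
Your proposal is correct and takes essentially the same route as the paper, whose proof is exactly the cycle \cref{thm:usc-cosieve-sc}, \cref{thm:cosieve-sc-tree}, \cref{thm:amalgam-forest}, \cref{thm:setinj-amalgam} that you assemble. The only detail you add beyond the paper's one-line citation is the explicit (and correct) verification that $\*{Inj}$ has the AP and the JEP, which the paper leaves implicit.
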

\begin{proof}
By \cref{thm:usc-cosieve-sc}, \cref{thm:cosieve-sc-tree}, \cref{thm:amalgam-forest}, and \cref{thm:setinj-amalgam}.
\end{proof}

This proves (iii)$\iff$(iv)$\iff$(v)$\implies$(i) in \cref{thm:amalgam} for finite posets $\*I$.

\section{General diagrams}
\label{sec:general}

We begin this section by explaining why in a context such as Fraïssé theory, where we are looking at diagrams in a monic category $\*C$ (of embeddings in the case of Fraïssé theory), the only diagram shapes $\*I$ worth considering are posets.

\begin{definition}
The \defn{monic reflection} of a category $\*I$ is the monic category $\@M(\*I)$ freely generated by $\*I$.  Explicitly, $\@M(\*I) = \*I/{\sim}$ for the least congruence $\sim$ on $\*I$ such that $\*I/{\sim}$ is monic.
\end{definition}

Thus for a diagram $F : \*I -> \*C$ in a monic category $\*C$, $F$ factors through $\@M(\*I)$, say as $F' : \@M(\*I) -> \*C$; and pullback of cocones over $F'$ along the projection $\pi : \*I -> \@M(\*I)$ is an isomorphism of categories (between the category of cocones over $F'$ and the category of cocones over $F$).  So in a monic category $\*C$, we may as well only consider diagrams whose shape $\*I$ is monic.  And by the following, an upward-simply-connected monic $\*I$ is necessarily a preorder; clearly $\*I$ may be replaced with an equivalent poset in that case.

\begin{lemma}
\label{thm:incat-mono}
Let $\*C$ be an idempotent inverse category.  Then there is at most one monomorphism between any two objects in $\*C$.
\end{lemma}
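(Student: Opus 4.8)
The plan is to show directly that any two monomorphisms $f, g : X \to Y$ in $\*C$ must coincide, using the three elementary facts recorded above (the involution $h \mapsto h^{-1}$, the commutation of idempotents, and the characterization that $f$ is monic iff $f^{-1} \circ f = 1_X$) together with the standing hypothesis that every endomorphism of $\*C$ is idempotent.

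First I would record an auxiliary fact: in an idempotent inverse category every endomorphism is its own pseudoinverse. Indeed, if $h \circ h = h$ then $h \circ h \circ h = h$, so $h$ satisfies both pseudoinverse equations with itself in the role of $h^{-1}$, whence $h^{-1} = h$ by uniqueness. Applying this to the two endomorphisms $g^{-1} \circ f : X \to X$ and $f \circ g^{-1} : Y \to Y$, and using that $(\,\cdot\,)^{-1}$ is an involutive functor (so that $(g^{-1} \circ f)^{-1} = f^{-1} \circ g$ and $(f \circ g^{-1})^{-1} = g \circ f^{-1}$), yields the two ``cross'' identities $g^{-1} \circ f = f^{-1} \circ g$ and $f \circ g^{-1} = g \circ f^{-1}$. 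I then set $e := g^{-1} \circ f = f^{-1} \circ g$, an idempotent endomorphism of $X$.

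Next I would compute $g \circ e$ and $f \circ e$ from these identities and the splitting relations $f^{-1} \circ f = g^{-1} \circ g = 1_X$ (which hold because $f, g$ are monic). Substituting $e = f^{-1} \circ g$ gives $g \circ e = (g \circ f^{-1}) \circ g = (f \circ g^{-1}) \circ g = f \circ (g^{-1} \circ g) = f$, and symmetrically, substituting $e = g^{-1} \circ f$ gives $f \circ e = (f \circ g^{-1}) \circ f = (g \circ f^{-1}) \circ f = g \circ (f^{-1} \circ f) = g$. Combining these with the idempotence $e \circ e = e$, I conclude $f = g \circ e = (f \circ e) \circ e = f \circ e = g$, as desired.

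The only genuine obstacle is spotting the right gadget: that the element to track is $e = g^{-1} \circ f = f^{-1} \circ g$, and that self-inverseness of endomorphisms on \emph{both} $X$ and $Y$ supplies exactly the two swap identities needed to cancel $f^{-1} \circ f$ and $g^{-1} \circ g$. Once these identities are in hand, the remaining manipulations are routine semigroup-style calculations.
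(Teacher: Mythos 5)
Your proof is correct; it uses the same basic toolkit as the paper (idempotence of the cross-composite endomorphisms, the splittings $f^{-1} \circ f = g^{-1} \circ g = 1_X$ coming from monicity, and uniqueness of pseudoinverses), but it is organized around a different key lemma and a different final computation. The paper's proof is a single chain: from idempotence of $f \circ g^{-1}$ alone it computes $g^{-1} \circ f = f^{-1} \circ (f \circ g^{-1} \circ f \circ g^{-1}) \circ g = f^{-1} \circ (f \circ g^{-1}) \circ g = 1_X$, then observes that the identity $g^{-1} \circ f = 1_X$ makes $g^{-1}$ a pseudoinverse of $f$, so $g^{-1} = f^{-1}$ and hence $f = g$ by uniqueness. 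You instead invoke uniqueness of pseudoinverses earlier and in a more general form --- every idempotent in an inverse category is its own pseudoinverse --- and apply it to \emph{both} endomorphisms $g^{-1} \circ f$ and $f \circ g^{-1}$ to get the swap identities $g^{-1} \circ f = f^{-1} \circ g$ and $f \circ g^{-1} = g \circ f^{-1}$, finishing with the cancellation $f = g \circ e = (f \circ e) \circ e = f \circ e = g$ where $e = g^{-1} \circ f$. What your route buys is symmetry and a reusable auxiliary fact (self-pseudoinverseness of idempotents needs no hypothesis on the ambient category beyond being inverse); what the paper's route buys is brevity and a stronger intermediate conclusion: it shows outright that $g^{-1} \circ f = 1_X$, i.e.\ your element $e$ is actually the identity, a fact your argument never needs. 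Both proofs are complete and rigorous.
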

\begin{proof}
Let $f, g : X --->[\*C]{} Y$ be two monomorphisms.  Then $f \circ g^{-1} : Y -> Y$ is idempotent, whence $g^{-1} \circ f = f^{-1} \circ f \circ g^{-1} \circ f \circ g^{-1} \circ g = f^{-1} \circ f \circ g^{-1} \circ g = 1_X$.  This implies that $g^{-1}$ is a pseudoinverse of $f$, whence $g^{-1} = f^{-1}$, whence $g = f$.
\end{proof}

\begin{corollary}
\label{thm:usc-mon-posetal}
An upward-simply-connected small monic category $\*I$ is necessarily a preorder.
\end{corollary}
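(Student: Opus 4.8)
The plan is to reduce the statement to \cref{thm:incat-mono}, using that $\@L(\*I)$ is idempotent by the hypothesis of upward-simple-connectedness. Concretely, I would first establish that the unit $\eta_\*I : \*I \to \@L(\*I)$ is \emph{faithful}; granting this, the result follows quickly. Indeed, for any parallel pair $i, j : I \to J$ in $\*I$, both $\eta_\*I(i)$ and $\eta_\*I(j)$ are monomorphisms in $\@L(\*I)$ (every arrow of $\*I$ becomes monic in $\@L(\*I)$ by construction), and since $\@L(\*I)$ is an idempotent inverse category, \cref{thm:incat-mono} forces $\eta_\*I(i) = \eta_\*I(j)$. Faithfulness of $\eta_\*I$ then yields $i = j$, so there is at most one arrow $I \to J$ and $\*I$ is a preorder.

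The crux is therefore the faithfulness of $\eta_\*I$, which fails for arbitrary $\*I$ and must genuinely use that $\*I$ is monic. My approach is a Cayley/Yoneda-style representation. Define a functor $F : \*I \to \*{PInj}$ by $F(J) := \sum_{Z \in \*I} \Hom_\*I(Z, J)$ on objects, sending an arrow $i : I \to J$ to the total map $g \mapsto i \circ g$ (which carries $\Hom_\*I(Z, I)$ into $\Hom_\*I(Z, J)$). Functoriality is immediate from associativity and unitality. Because $\*I$ is monic, each such map is injective, hence a total injection, hence a monomorphism in $\*{PInj}$ (the monomorphisms of $\*{PInj}$ being exactly the total injections), equivalently $F(i)^{-1} \circ F(i) = 1_{F(I)}$. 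Moreover $F$ is faithful: distinct parallel $i \ne j : I \to J$ are already separated on the summand $\Hom_\*I(I, I)$, since $F(i)(1_I) = i \ne j = F(j)(1_I)$.

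Next I would invoke the universal property of $\@L(\*I)$ from \cref{defn:L}: since $F$ lands in the inverse category $\*{PInj}$ and sends every arrow of $\*I$ to a monomorphism, it factors as $F = \~F \circ \eta_\*I$ for a unique inverse-category functor $\~F : \@L(\*I) \to \*{PInj}$. As $F = \~F \circ \eta_\*I$ is faithful, so is $\eta_\*I$, supplying the missing step above.

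The main obstacle is precisely isolating and proving faithfulness of $\eta_\*I$; one cannot simply appeal to the embedding $\Psi_{\@L(\*I)}$ of \cref{thm:invcat-yoneda}, since that would only show $\Psi_{\@L(\*I)} \circ \eta_\*I$ faithful exactly when $\eta_\*I$ is, which is circular. The value of the explicit representation $F$ above is that it is built directly from the $\Hom$-sets of $\*I$ rather than of $\@L(\*I)$, so that the monicity of $\*I$ can be leveraged both to make the generators act by monomorphisms (so that $F$ factors through $\@L(\*I)$ at all) and to keep the action faithful.
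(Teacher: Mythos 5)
Your proposal is correct and is essentially the paper's own proof: the paper likewise builds the Cayley/Yoneda-style representation $I \mapsto \sum_{K} \Hom_\*I(K, I)$ into $\*{Inj} \subseteq \*{PInj}$, uses monicity of $\*I$ to see it lands in monomorphisms and is an embedding, factors it through $\eta_\*I$ by the universal property of $\@L(\*I)$ to conclude $\eta_\*I$ is faithful, and then finishes with \cref{thm:incat-mono} exactly as you do. Your write-up just makes explicit the verifications (functoriality, faithfulness via $1_I$, monos in $\*{PInj}$ being total injections) that the paper leaves implicit.
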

\begin{proof}
Since $\*I$ is monic, we have an embedding
\begin{align*}
\*I &--> \*{Inj} \subseteq \*{PInj} \\
I &|--> \sum_{K \in \*I} (K --->[\*I]{} I) \\
(I --->[\*I]{i} J) &|--> (j |-> i \circ j);
\end{align*}
this embedding factors through the canonical functor $\eta : \*I -> \@L(\*I)$, so $\eta$ must be faithful.  By \cref{thm:incat-mono}, for $i, j : I --->[\*I]{} J$, we have $\eta(i) = \eta(j)$, whence $i = j$.
\end{proof}

Now we check that $\@L(\*I)$ is invariant when passing from $\*I$ to $\@M(\*I)$:

\begin{lemma}
\label{thm:L-M}
For any (small) category $\*I$, $\@L(\*I) \cong \@L(\@M(\*I))$.
\end{lemma}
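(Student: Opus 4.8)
The plan is to show that the comparison functor $\@L(\pi) : \@L(\*I) \to \@L(\@M(\*I))$ induced by the projection $\pi : \*I \to \@M(\*I)$ is an isomorphism of inverse categories. Everything will rest on a single observation, which I would isolate as the main lemma: \emph{any functor $G : \*I \to \*C$ into an inverse category $\*C$ which sends every morphism of $\*I$ to a monomorphism automatically respects the monic-reflection congruence $\sim$}, so that $G$ factors through $\pi$.

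To prove this observation, I would consider the kernel congruence $\approx$ of $G$, namely $f \approx f'$ iff $f, f'$ are parallel and $G(f) = G(f')$; this is a congruence on $\*I$, being the kernel of a functor. The quotient $\*I/{\approx}$ is then monic: if $f \circ g \approx f \circ h$ then $G(f) \circ G(g) = G(f) \circ G(h)$, and since $G(f)$ is monic (it is the image of a morphism of $\*I$), we get $G(g) = G(h)$, i.e. $g \approx h$. Hence by minimality of $\sim$ among congruences with monic quotient we have $\sim \subseteq \approx$, which is exactly the statement that $G$ respects $\sim$.

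With the lemma in hand, I would assemble the isomorphism explicitly. Applying the lemma to $\eta_{\*I} : \*I \to \@L(\*I)$ (every $\eta_{\*I}(i)$ is monic in the inverse category $\@L(\*I)$) shows that $\eta_{\*I}$ factors through $\pi$ as some functor $\bar\eta : \@M(\*I) \to \@L(\*I)$ with $\bar\eta \circ \pi = \eta_{\*I}$. Since $\bar\eta$ sends each morphism of $\@M(\*I)$ to a monomorphism, the universal property of $\@L(\@M(\*I))$ extends it to an inverse functor $\widetilde{\bar\eta} : \@L(\@M(\*I)) \to \@L(\*I)$. I would then check that $\widetilde{\bar\eta}$ and $\@L(\pi)$ are mutually inverse by comparing them on the generating images $\eta_{\*I}(\*I)$ and $\eta_{\@M(\*I)}(\@M(\*I))$: both composites restrict to the respective units (for the second composite one also uses that $\pi$ is full, hence right-cancellable among functors out of $\@M(\*I)$), so by the uniqueness clause in the universal property of $\@L$ they agree with the identity functors.

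The only real content is the main lemma; the rest is routine diagram-chasing with the universal properties of $\@L$ and $\@M$. The point to be careful about is that $G(f)$ is monic not merely for the generators but for \emph{every} morphism $f$ of $\*I$ — this is exactly what the defining universal property of $\@L$ provides (equivalently, composites of monomorphisms are monic), and it is what lets the cancellation go through in the quotient. Alternatively, the whole argument can be packaged as a natural bijection $\Hom_{\*{InvCat}}(\@L(\@M(\*I)), \*C) \cong \Hom_{\*{InvCat}}(\@L(\*I), \*C)$, the constraint ``factors through $\pi$'' on the left being automatic by the lemma, whence $\@L(\pi)$ is an isomorphism by the Yoneda lemma.
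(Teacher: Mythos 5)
Your proof is correct, but it takes a more hands-on route than the paper's. The paper disposes of the lemma purely formally: $\@L$ is left adjoint to the composite $\*{InvCat} \to \*{MonCat} \to \*{Cat}$ (the first functor taking an inverse category to its subcategory of monomorphisms, the second being the full inclusion, whose left adjoint is $\@M$), and since left adjoints compose, $\@L \cong \@L|_{\*{MonCat}} \circ \@M$. Your key lemma --- that any functor $G : \*I \to \*C$ into an inverse category sending every morphism to a monomorphism respects the monic-reflection congruence --- is precisely the content hidden inside the paper's appeal to the adjunction $\@M \dashv (\*{MonCat} \hookrightarrow \*{Cat})$: such a $G$ lands in the subcategory $\@S(\*C)$ of monomorphisms, which is itself a monic category, so $G$ factors through $\@M(\*I)$ by the universal property of the reflection. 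Your kernel-congruence argument gives a clean, self-contained proof of that factorization (note that the minimality appeal is legitimate because congruences with monic quotient are closed under intersection, so the least one exists and is contained in your $\approx$), and the assembly of the two mutually inverse functors, including the use of fullness of $\pi$ on the $\@L(\@M(\*I))$ side and the observation that \emph{every} morphism of $\*I$ (not just a generator) becomes split monic in $\@L(\*I)$, is sound. What the paper's route buys is brevity and naturality of the isomorphism for free; what yours buys is explicitness --- no need to set up the factorization of $\@S$ through $\*{MonCat}$ or invoke composition of adjoints --- and your closing Yoneda-style repackaging then recovers the adjunction picture from your concrete argument.
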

\begin{proof}
$\@L : \*{Cat} -> \*{InvCat}$ is left adjoint to the composite
\begin{align*}
\*{InvCat} --> \*{MonCat} --> \*{Cat}
\end{align*}
(where $\*{MonCat}$ is the category of (small) monic categories) where the first functor takes an inverse category to its subcategory of monomorphisms, and the second functor is the full inclusion, which has left adjoint $\@M : \*{Cat} -> \*{MonCat}$.
\end{proof}

\begin{corollary}
\label{thm:amalgam-monic}
Let $\*I$ be a connected upward-simply-connected category with finitely many objects.  Then every diagram $F : \*I -> \*C$ in a monic category $\*C$ with the AP has a cocone.
\end{corollary}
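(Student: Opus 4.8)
The plan is to reduce to the finite-poset case already settled by \cref{thm:amalgam-tree}, using the monic reflection $\@M$. Since $\*C$ is monic, the diagram $F : \*I -> \*C$ factors through the projection $\pi : \*I -> \@M(\*I)$ as some $F' : \@M(\*I) -> \*C$; and as recorded in the discussion following the definition of $\@M$, pullback along $\pi$ is an isomorphism between the category of cocones over $F'$ and the category of cocones over $F$. Hence it suffices to build a cocone over $F'$, and I may as well prove the statement with $\@M(\*I)$ in place of $\*I$.

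First I would check that $\@M(\*I)$ again satisfies all the hypotheses. It is monic by construction, and it is connected with finitely many objects because $\pi$ is a quotient functor that is the identity on objects while $\*I$ has both properties. It is upward-simply-connected because \cref{thm:L-M} gives $\@L(\@M(\*I)) \cong \@L(\*I)$, and the right-hand side is idempotent by hypothesis, so the left-hand side is too.

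Writing $\*J := \@M(\*I)$, I now have a connected upward-simply-connected monic category with finitely many objects, so \cref{thm:usc-mon-posetal} forces $\*J$ to be a preorder. Passing to a skeleton exhibits $\*J$ as equivalent to a finite connected poset $\*P$, which is again upward-simply-connected; restricting $F'$ to $\*P$ yields a diagram whose cocones transport back to cocones over $F'$ along the equivalence (extend each cocone leg along the isomorphism identifying an object of $\*J$ with its representative in $\*P$). By \cref{thm:usc-etc}, the finite upward-simply-connected poset $\*P$ is forest-like, and being connected it is in fact tree-like; so \cref{thm:amalgam-tree}, which requires only the AP, produces the desired cocone over $F' | \*P$, completing the argument.

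The substantive content is entirely in \cref{thm:amalgam-tree}, so this corollary is essentially a bookkeeping reduction, and the only delicate points are the transfers of hypotheses. Upward-simple-connectedness survives the monic reflection by \cref{thm:L-M}, and I expect the one step that needs explicit care to be its invariance — together with the invariance of cocone-existence — under the equivalence between the preorder $\*J$ and its poset skeleton $\*P$, which holds because $\@L$ preserves equivalences and because an equivalence of shape categories preserves and reflects cocones.
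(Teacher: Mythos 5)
Your proposal is correct and follows essentially the same route as the paper's proof: factor $F$ through the monic reflection $\@M(\*I)$, transfer upward-simple-connectedness via \cref{thm:L-M}, conclude from \cref{thm:usc-mon-posetal} that $\@M(\*I)$ is a finite preorder, pass to an equivalent poset, and apply \cref{thm:usc-etc} together with \cref{thm:amalgam-tree}. The paper compresses the skeleton/equivalence step into a single ``clearly'' where you spell out the transfer of hypotheses, but the argument is the same.
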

\begin{proof}
Since $\*C$ is monic, $F$ factors through $\@M(\*I)$, as $F' : \@M(\*I) -> \*C$, say.  Since $\*I$ is upward-simply-connected, so is $\@M(\*I)$, whence $\@M(\*I)$ is a finite preorder.  Clearly replacing $\@M(\*I)$ with an equivalent poset does not change whether $F'$ has a cocone, so by \cref{thm:amalgam-tree}, $F'$ has a cocone, which induces a cocone over $F$.
\end{proof}

We now deduce the general case of (iii)$\implies$(i) in \cref{thm:amalgam}:

\begin{lemma}
\label{thm:ap-moniq}
Let $\*C$ be a category with the AP.  Then
\begin{align*}
f \sim g \iff \exists h \in \*C.\, h \circ f = h \circ g
\end{align*}
defines a congruence on $\*C$, thus $\@M(\*C) = \*C/{\sim}$.
\end{lemma}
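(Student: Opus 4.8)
The plan is to verify directly that $\sim$ (understood as a relation on parallel morphisms $f, g : X \to Y$, since otherwise $h \circ f = h \circ g$ does not typecheck) is a congruence whose quotient is monic and least among such, with the AP entering only to establish transitivity and left-compatibility. Reflexivity is witnessed by $h = 1_Y$, symmetry is immediate from the symmetric form of the definition, and right-compatibility is equally easy: if $g \sim g'$ via some $h$ with $h \circ g = h \circ g'$, then the same $h$ witnesses $g \circ f \sim g' \circ f$, since $h \circ (g \circ f) = h \circ (g' \circ f)$ for any $f$.

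The two steps that genuinely invoke the AP both amount to replacing a pair of competing witnesses emanating from a common object by a single witness extracted from a cocone. For transitivity, I would suppose $f \sim g$ via $h_1 : Y \to Z_1$ and $g \sim k$ via $h_2 : Y \to Z_2$, apply the AP to the span $Z_1 \xleftarrow{h_1} Y \xrightarrow{h_2} Z_2$ to obtain $p : Z_1 \to W$ and $q : Z_2 \to W$ with $p \circ h_1 = q \circ h_2$, and then set $h := p \circ h_1 = q \circ h_2$; the two hypotheses give $h \circ f = h \circ g = h \circ k$, so $f \sim k$. For left-compatibility, I would suppose $f \sim f'$ via $h : Y \to W$, take an arbitrary $g : Y \to Z$, apply the AP to $Z \xleftarrow{g} Y \xrightarrow{h} W$ to get $p : Z \to V$ and $q : W \to V$ with $p \circ g = q \circ h$, and observe that $p$ witnesses $g \circ f \sim g \circ f'$ because $p \circ g \circ f = q \circ h \circ f = q \circ h \circ f' = p \circ g \circ f'$. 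Combining right- and left-compatibility through the equivalence relation just established yields full compatibility, so $\sim$ is a congruence.

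It then remains to identify $\*C/{\sim}$ with $\@M(\*C)$, and here the AP is no longer needed. First, $\*C/{\sim}$ is monic: if $[g] \circ [f] = [g] \circ [f']$, i.e.\ $g \circ f \sim g \circ f'$ witnessed by some $k$ with $k \circ g \circ f = k \circ g \circ f'$, then $k \circ g$ witnesses $f \sim f'$, so $[f] = [f']$. Second, $\sim$ is the least congruence with monic quotient: if $\approx$ is any congruence making $\*C/{\approx}$ monic and $f \sim g$ via $h$, then $[h] \circ [f] = [h] \circ [g]$ in $\*C/{\approx}$, so monicity of $[h]$ forces $[f] = [g]$, i.e.\ $f \approx g$. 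Thus $\sim$ is exactly the least congruence making the quotient monic, giving $\*C/{\sim} = \@M(\*C)$.

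The main obstacle is simply recognizing the correct spans to feed into the AP in the transitivity and left-compatibility steps; once the cocones over $W$ and $V$ are produced, the verifications are routine diagram chases, and the minimality statement is a formal consequence of the monicity of $[h]$ in any monic quotient.
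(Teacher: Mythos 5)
Your proposal is correct and takes essentially the same approach as the paper's proof: the AP is applied to exactly the same two spans (the two witnesses, for transitivity; the witness together with the map being post-composed, for left-compatibility), with the remaining congruence axioms handled as trivial. The only difference is that you also spell out the identification of the quotient with the monic reflection (monicity of the quotient category and minimality of $\sim$), which the paper's proof leaves implicit.
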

\begin{proof}
Clearly $\sim$ is reflexive, symmetric, and right-compatible.  To check left-compatibility, suppose $f \sim g : X --->[\*C]{} Y$ and $h : Y --->[\*C]{} Z$; we show $h \circ f \sim h \circ g$.  Let $k : Y --->[\*C]{} W$ witness $f \sim g$, so that $k \circ f = k \circ g$.  Then $h \circ f \sim h \circ g$ is witnessed by $l$ such that the following diagram commutes:
\begin{equation*}
\begin{tikzcd}
& V \\
Z \urar[dashed]{l} && W \ular[dashed] \\
& Y \ular{h} \urar[swap]{k}
\end{tikzcd}
\end{equation*}
To check transitivity, suppose $f \sim g \sim h : X --->[\*C]{} Y$; we show $f \sim h$.  Let $k : Y --->[\*C]{} Z$ witness $f \sim g$ and $l : Y --->[\*C]{} W$ witness $g \sim h$.  Then $f \sim h$ is witnessed by an amalgam of $k, l$.
\end{proof}

\begin{proposition}
Let $\*I$ be a finitely generated connected upward-simply-connected category.  Then every diagram $F : \*I -> \*C$ in a category $\*C$ with the AP has a cocone.
\end{proposition}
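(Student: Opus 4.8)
The plan is to reduce to the monic case already settled in \cref{thm:amalgam-monic} by passing to the monic reflection of the \emph{target} category. Write $q : \*C \to \@M(\*C) = \*C/{\sim}$ for the quotient functor, where $\sim$ is the congruence produced by \cref{thm:ap-moniq}. The first thing to observe is that $\@M(\*C)$ again has the AP: the quotient $q$ leaves objects unchanged, so any pushout diagram in $\@M(\*C)$ lifts, by choosing representatives, to a pushout diagram in $\*C$, and applying $q$ to an amalgam of the latter gives an amalgam of the former. Since $\@M(\*C)$ is monic and $\*I$ is connected, upward-simply-connected, and has finitely many objects (being finitely generated), \cref{thm:amalgam-monic} applies to the composite $q \circ F : \*I \to \@M(\*C)$, yielding a cocone $(X, (\bar g_I)_{I \in \*I})$ over $q \circ F$ in $\@M(\*C)$.

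The remaining, and main, task is to lift this cocone back along $q$ to an honest cocone over $F$ in $\*C$. Choose for each object $I \in \*I$ a representative $g_I : F(I) \to X$ of the class $\bar g_I$. The cocone identities $\bar g_I = \bar g_J \circ \overline{F(i)}$ now hold only modulo $\sim$: for each of the finitely many generating morphisms $i : I \to J$ of $\*I$, unwinding the definition of $\sim$ from \cref{thm:ap-moniq} produces a witness $h_i : X \to Y_i$ with $h_i \circ g_I = h_i \circ g_J \circ F(i)$. I would then amalgamate these finitely many witnesses into a single morphism out of $X$ using the AP: given two witnesses $h_1 : X \to Y_1$ and $h_2 : X \to Y_2$, form an amalgam $p : Y_1 \to Z$, $q' : Y_2 \to Z$ of the span $Y_1 \xleftarrow{h_1} X \xrightarrow{h_2} Y_2$, and note that $h := p \circ h_1 = q' \circ h_2$ satisfies $h \circ a = h \circ b$ whenever $h_1 \circ a = h_1 \circ b$ or $h_2 \circ a = h_2 \circ b$, since $h$ factors through both $h_1$ and $h_2$. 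Iterating over the finitely many generators yields one $h : X \to Z$ with $h \circ g_I = h \circ g_J \circ F(i)$ for every generating $i : I \to J$.

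Finally, set $g'_I := h \circ g_I : F(I) \to Z$. The displayed identities give $g'_I = g'_J \circ F(i)$ for every generating $i : I \to J$, and this extends to all morphisms of $\*I$ by composing along factorizations into generators; hence $(Z, (g'_I)_{I \in \*I})$ is a cocone over $F$ in $\*C$, as desired. The main obstacle is precisely this lifting step: the cocone obtained in $\@M(\*C)$ is a cocone only ``up to the congruence $\sim$'', and the AP is exactly the tool that lets us merge the finitely many equalizing witnesses — one per generator — into a single morphism rectifying all the cocone identities at once. Finite generation of $\*I$ is essential here, since it bounds the number of witnesses that must be combined.
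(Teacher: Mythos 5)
Your proof is correct and takes essentially the same route as the paper's: pass to the monic reflection $\@M(\*C)$, apply \cref{thm:amalgam-monic} to the composite diagram, lift the resulting cocone pointwise back to $\*C$, and use \cref{thm:ap-moniq} plus the AP to amalgamate the finitely many equalizing witnesses attached to a finite generating set of arrows. Your extra verifications (that $\@M(\*C)$ inherits the AP, and that the cocone identities propagate from generators to all arrows) are points the paper leaves implicit, but they are exactly the right checks.
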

\begin{proof}
Let $\pi : \*C -> \@M(\*C)$ be the monic reflection of $\*C$.  By \cref{thm:amalgam-monic}, $\pi \circ F : \*I -> \@M(\*C)$ has a cocone $(Y, \-g)$, where $g_I : F(I) --->[\@M(\*C)]{} Y$ for each $I \in \*I$.  Pick for each $I \in \*I$ a lift $f_I : F(I) --->[\*C]{} Y$ of $g_I$.  For each $i : I --->[\*I]{} J$, since $(Y, \-g)$ is a cocone over $\pi \circ F$, we have $\pi(f_I) = g_I = g_J \circ \pi(F(i)) = \pi(f_J \circ F(i))$; by \cref{thm:ap-moniq}, this means there is some $h_i : Y --->[\*C]{} Z_i$ such that $h_i \circ f_I = h_i \circ f_J \circ F(i)$.  Now letting $h : Y --->[\*C]{} Z$ be an amalgam of $h_i$ for all arrows $i$ in some finite generating set of arrows in $\*I$, we have $h \circ f_I = h \circ f_J \circ F(i)$ for all $i : I --->[\*I]{} J$ in the generating set, so $(Z, h \circ f_I)_{I \in \*I}$ is a cocone over $F$.
\end{proof}

\begin{corollary}
Let $\*I$ be a finitely generated upward-simply-connected category.  Then every diagram $F : \*I -> \*C$ in a category $\*C$ with the AP and the JEP has a cocone.  \qed
\end{corollary}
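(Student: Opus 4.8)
The plan is to reduce to the connected case handled by the preceding proposition and then glue the resulting cocones together using the JEP, in exactly the manner of \cref{thm:amalgam-forest}. Since $\*I$ is finitely generated it has finitely many objects, hence finitely many connected components $\*I_1, \dots, \*I_n$; each is a full, upward-closed subcategory of $\*I$, and $F$ restricts to a diagram $F_c := F|\*I_c : \*I_c -> \*C$ on each.

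First I would check that each component satisfies the hypotheses of the preceding proposition. Finite generation passes to $\*I_c$ because the finitely many generating arrows of $\*I$ each lie in a single component, so those landing in $\*I_c$ generate it. To see that $\*I_c$ is upward-simply-connected, I use that it is an upward-closed subcategory: by \cref{thm:L-cosieve-faithful} the induced functor $\phi : \@L(\*I_c) -> \@L(\*I)$ is faithful, so for any endomorphism $f$ in $\@L(\*I_c)$, its image in the idempotent category $\@L(\*I)$ satisfies $\phi(f \circ f) = \phi(f) \circ \phi(f) = \phi(f)$, and faithfulness forces $f \circ f = f$; thus $\@L(\*I_c)$ is idempotent and $\*I_c$ is upward-simply-connected.

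Applying the preceding proposition to each $F_c$ then yields a cocone $(X_c, (f^c_I)_{I \in \*I_c})$ in $\*C$. As there are only finitely many objects $X_1, \dots, X_n$, the JEP supplies a cocone $(Y, (g_c : X_c -> Y)_{c})$ over the discrete diagram they form. Setting $f_I := g_c \circ f^c_I$ for $I \in \*I_c$, I claim $(Y, (f_I)_{I \in \*I})$ is a cocone over $F$: any arrow $i : I -> J$ of $\*I$ has $I$ and $J$ in a common component $\*I_c$, whence $f_J \circ F(i) = g_c \circ f^c_J \circ F(i) = g_c \circ f^c_I = f_I$, using that $(X_c, (f^c_I)_I)$ is a cocone over $F_c$.

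This argument has no genuine obstacle: the only steps needing attention are the two inheritance claims for the components, after which the JEP does all the remaining work. The degenerate case $n = 0$ (empty $\*I$) is covered as well, since an object of $\*C$ is exactly a cocone over the empty diagram and the JEP guarantees $\*C$ is nonempty.
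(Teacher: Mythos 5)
Your proposal is correct and matches the paper's intended argument: the corollary is left as immediate (\qed) precisely because one decomposes $\*I$ into its finitely many connected components, applies the preceding proposition to each (each component being finitely generated, connected, and upward-simply-connected), and glues the resulting cocones with the JEP, exactly as in \cref{thm:amalgam-forest}. Your careful verification of the two inheritance claims (finite generation via the generating arrows, and upward-simple-connectedness via the faithfulness of $\@L(\*I_c) \to \@L(\*I)$ from \cref{thm:L-cosieve-faithful}) fills in details the paper treats as routine, but the route is the same.
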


This proves (iii)$\implies$(i) in \cref{thm:amalgam}.  Since (i)$\implies$(ii) is obvious, to complete the proof of the theorem it only remains to check

\begin{lemma}
Let $\*I$ be a category such that every $\*I$-shaped diagram in a category with the AP has a cocone.  Then $\*I$ is connected.
\end{lemma}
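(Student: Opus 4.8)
The plan is to prove the contrapositive: if $\*I$ is not connected, then I will exhibit a category $\*C$ with the AP together with a diagram $F : \*I \to \*C$ that has no cocone. The conceptual point is that the AP, in the absence of the JEP, gives no way to amalgamate objects lying in distinct connected components, so a disconnected $\*I$ can be mapped into a category whose relevant images admit no common cocone. This is exactly the feature that distinguishes the present statement from the AP-plus-JEP version.

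First I would dispose of the degenerate case $\*I = \emptyset$, which counts as disconnected. Here one takes $\*C = \emptyset$: it satisfies the AP vacuously (there are no pushout diagrams to complete), yet the unique empty diagram $\emptyset \to \emptyset$ has no cocone, since a cocone requires an apex object and $\emptyset$ has none.

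Now suppose $\*I$ is nonempty and disconnected, so that $\pi_0(\*I)$ has at least two elements. I would take $\*C$ to be the discrete category whose set of objects is $\pi_0(\*I)$ and whose only morphisms are identities. This $\*C$ has the AP: any span $\bullet \leftarrow \bullet \to \bullet$ in a discrete category is built from identity morphisms, hence has all three vertices equal to a single object $X$, and $(X, 1_X, 1_X)$ is a cocone over it. I would then define $F : \*I \to \*C$ by sending each object $I$ to its connected component $[I] \in \pi_0(\*I)$ and each morphism of $\*I$ to an identity. This is a functor precisely because the source and target of any morphism of $\*I$ lie in the same connected component, so $F$ automatically respects composition and identities.

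Finally I would check that $F$ has no cocone. A cocone $(Y, (f_I)_{I \in \*I})$ over $F$ would supply morphisms $f_I : F(I) \to Y$ in $\*C$; but the only morphisms of a discrete category are identities, so each $f_I$ forces $F(I) = Y$. Since $F$ is surjective on objects onto $\pi_0(\*I)$, which has at least two elements, no single $Y$ can equal every $F(I)$, a contradiction. I do not expect a genuine obstacle here; the only things to get right are the vacuous bookkeeping for $\*I = \emptyset$ and the (routine) observation that a discrete category satisfies the AP while failing the JEP, which is precisely the separation to which the hypothesis is sensitive.
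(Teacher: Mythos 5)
Your proposal is correct and is exactly the paper's own argument: the paper's proof consists of the single line ``Consider the diagram $\*I \to \pi_0(\*I)$ where $\pi_0(\*I)$ is regarded as a discrete category,'' and your construction (including the empty case, where $\pi_0(\emptyset) = \emptyset$) is precisely this, with the routine verifications written out.
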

\begin{proof}
Consider the diagram $\*I -> \pi_0(\*I)$ where $\pi_0(\*I)$ is regarded as a discrete category.
\end{proof}

\section{Decidability}
\label{sec:decidability}

Suppose we are given a finite category $\*I$ in some explicit form (say, a list of its morphisms and a composition table).  Then our proof of \cref{thm:amalgam} yields a simple procedure for testing whether a ``generalized amalgamation property'' holds for $\*I$-shaped diagrams:

\begin{corollary}
For a finite category $\*I$, it is decidable whether $\*I$ is upward-simply-connected, hence whether every $\*I$-shaped diagram in a category with the AP (and possibly the JEP) has a cocone.
\end{corollary}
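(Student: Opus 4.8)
The plan is to exhibit a finite, effectively computable object whose structure directly witnesses upward-simple-connectedness, and then reduce the question to a finite check on that object. The natural candidate is $\@L(\*I)$ itself: by \cref{defn:L}, $\*I$ is upward-simply-connected exactly when every endomorphism of $\@L(\*I)$ is idempotent. So the entire problem reduces to (a) building $\@L(\*I)$ effectively as a finite inverse category, and (b) checking the finite list of endomorphisms for idempotency. The main obstacle is step (a): a priori $\@L(\*I)$ is defined by a universal property as a \emph{freely generated} inverse category, and free constructions need not be finite. I would therefore need an argument that $\@L(\*I)$ is finite and computable when $\*I$ is finite.

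**Making $\@L(\*I)$ finite and computable.**

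First I would invoke the representation \cref{thm:invcat-yoneda}: the embedding $\Psi_{\@L(\*I)} : \@L(\*I) \hookrightarrow \*{PInj}$ is faithful, so a morphism $f : I \to J$ in $\@L(\*I)$ is determined by its action $\Psi_{\@L(\*I)}(f)$ as a partial injection on the (finite, when $\*I$ is finite) hom-sets $\sum_{Z} \Hom_{\@L(\*I)}(Z,-)$. Thus I would present each generating morphism $\eta(i)$ and its pseudoinverse $\eta(i)^{-1}$ as an explicit partial injection and close the generating set under composition. Because $\*{PInj}$ on a finite set has only finitely many partial injections, this closure terminates, yielding a finite composition table for $\@L(\*I)$. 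Concretely the underlying sets are the sets $\Hom_{\@L(\*I)}(Z,X)$, which I can enumerate as the orbit of the identities under the (terminating) action of the generators; the zigzag/path description following \cref{thm:setinj-colim} guarantees every morphism is a reduced word $i_{2n}^{-1} \circ \cdots \circ i_1$, and the finite-state closure under composition in $\*{PInj}$ provides the needed normal forms. This simultaneously constructs $\@L(\*I)$ and decides equality of morphisms in it.

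**The final check and its correctness.**

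Once $\@L(\*I)$ is available as a finite category with a decidable equality, the remaining step is routine: enumerate all endomorphisms $f : I \to I$ for each object $I$ (finitely many, since $\@L(\*I)$ is finite), compute $f \circ f$ using the composition table, and test $f \circ f = f$. The category is idempotent, hence $\*I$ is upward-simply-connected, if and only if every such test passes. By the equivalence (i)$\iff$(iii) of \cref{thm:amalgam} (respectively \cref{thm:setinj-amalgam}), this settles whether every $\*I$-shaped diagram in a category with the AP, and if desired the JEP, has a cocone. I expect the genuinely substantive point to be the termination and finiteness argument in the construction of $\@L(\*I)$; everything downstream is a bounded search. As a practical shortcut one could instead compute only the diagram $\Psi_{\@L(\*I)} \circ \eta_\*I$ and apply (i)$\iff$(ii) of \cref{thm:setinj-amalgam}, checking injectivity of the colimit coprojections directly via the zigzag relation on the finite disjoint union $\sum_I F(I)$, which is a finite connectivity computation and avoids building the full inverse category.
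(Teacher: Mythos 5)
There is a genuine gap, and it sits exactly where you predicted the ``substantive point'' would be: the finiteness and computability of $\@L(\*I)$. The sets on which \cref{thm:invcat-yoneda} represents $\@L(\*I)$ are $\Psi(X) = \sum_{Z} \Hom_{\@L(\*I)}(Z, X)$ --- hom-sets of $\@L(\*I)$ itself, the very object you are trying to construct, not hom-sets of $\*I$. These need not be finite when $\*I$ is finite: for the bowtie poset \eqref{eqn:bowtie}, $\pi_1(\*I)$ is equivalent to $\#Z$ and is a quotient of $\@L(\*I)$, so the powers of the loop $f \circ h^{-1} \circ k \circ g^{-1}$ are pairwise distinct endomorphisms of $A$ in $\@L(\*I)$, and $\Hom_{\@L(\*I)}(A, A)$ is infinite. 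Consequently the claim ``because $\*{PInj}$ on a finite set has only finitely many partial injections, this closure terminates'' rests on a false premise: the closure fails to terminate precisely on (some) negative instances, so what you describe is not a decision procedure. The same circularity defeats your proposed shortcut, since the diagram $\Psi_{\@L(\*I)} \circ \eta_\*I$ again has the possibly infinite sets $\Psi(I)$ as its values. Note also that reduced words $i_{2n}^{-1} \circ \dotsb \circ i_1$ are not normal forms: deciding when two such words are equal in $\@L(\*I)$ is the word problem for an inverse category presented by generators and relations, which is exactly what is at issue (and, as the paper's closing remark about finitely presented $\*I$ and the triviality problem for groups shows, such word problems cannot be taken for granted).

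The paper's proof never computes $\@L(\*I)$. It first computes the monic reflection $\@M(\*I)$, which is a \emph{quotient} of the finite category $\*I$, hence genuinely finite and computable in finitely many steps; by \cref{thm:L-M} this leaves $\@L$ unchanged. If $\@M(\*I)$ is not a preorder, then $\*I$ is not upward-simply-connected by \cref{thm:usc-mon-posetal}; otherwise one replaces $\@M(\*I)$ by an equivalent finite poset and runs the purely combinatorial recursive test for being forest-like justified by \cref{thm:posetal-cosieve-sc-decomp} and \cref{thm:usc-etc}: pick a minimal element $K$, check that $\*L \cap \up K$ is connected for each component $\*L$ of the complement, and recurse. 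Any repair of your approach would have to detect non-idempotency of $\@L(\*I)$ without constructing it --- since in the negative cases $\@L(\*I)$ can be infinite --- and that is precisely what the paper's structural route through $\@M(\*I)$ and forest-likeness accomplishes.
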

\begin{proof}
First, compute the monic reflection $\@M(\*I)$; this can be done in finite time, since $\@M(\*I) = \*I/{\sim}$ for the least congruence $\sim$ such that $\*I/{\sim}$ is monic, and $\sim$ can be computed by taking the equality congruence and closing it under finitely many conditions, which takes finitely many steps since $\*I$ is finite.  If $\@M(\*I)$ is not a preorder, then $\*I$ is not upward-simply-connected by \cref{thm:usc-mon-posetal}.  If $\@M(\*I)$ is a preorder, then replace it with an equivalent poset $\*J$ and recursively test whether $\*J$ is forest-like using \cref{thm:posetal-cosieve-sc-decomp}, i.e., for each connected component $\*K \in \pi_0(\*J)$, test whether $\*K$ is tree-like by picking some minimal $K \in \*K$ and then for each $\*L \in \pi_0(\*K \setminus \{K\})$ testing whether $\*L \cap \up K$ is connected and whether $\*L$ is tree-like.
\end{proof}

This cannot be extended to finitely presented $\*I$, since if $\*I$ is a group regarded as a one-object category then $\@L(\*I) = \*I$ is idempotent iff $\*I$ is trivial, and it is undecidable whether a finite group presentation presents the trivial group (see e.g., \cite[3.4]{Mi}).

We end by pointing out the following simple, but somewhat surprising, consequence of Paré's result, \cref{thm:pushout}, which shows that the analogy between the AP and pushouts breaks down when it comes to decidability:

\begin{corollary}[of \cref{thm:pushout}]
For a finite poset $\*I$, it is undecidable whether $\*I$ is simply-connected, hence whether every $\*I$-shaped diagram in a category with pushouts has a colimit.
\end{corollary}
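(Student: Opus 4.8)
The plan is to reduce from the undecidability of triviality for finite group presentations---the same fact (\cite[3.4]{Mi}) invoked just above---by realizing an arbitrary finitely presented group as the fundamental group of an \emph{effectively constructible} finite poset. Recall from \cref{rmk:nerve} that for a poset $\*I$ the groupoid $\pi_1(\*I)$ is the fundamental groupoid of the geometric realization of the nerve (order complex) of $\*I$. Hence, for a \emph{connected} poset, being simply-connected in the sense of \cref{defn:pi1} (i.e.\ $\pi_1(\*I)$ an equivalence relation) is exactly the condition that the realization of its nerve be simply-connected as a topological space: connectedness forces at least one morphism between any two objects, so ``at most one'' is equivalent to ``the vertex group at each object is trivial'', which is the triviality of the topological fundamental group.

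Given a finite presentation $P = \langle x_1, \dotsc, x_n \mid r_1, \dotsc, r_m \rangle$ of a group $G$, I would first form its presentation $2$-complex (one $0$-cell, one edge per generator, one $2$-cell per relator), whose fundamental group is $G$, and then triangulate it to obtain a finite connected simplicial complex $K_P$ with $\pi_1(|K_P|) \cong G$. Let $\*I_P$ be the face poset of $K_P$, i.e.\ the simplices of $K_P$ ordered by inclusion. By \cref{rmk:nerve}, the nerve of $\*I_P$ is the order complex of this face poset, namely the barycentric subdivision of $K_P$, whose realization is homeomorphic to $|K_P|$. Therefore $\pi_1(\*I_P)$ is the fundamental groupoid of a space homeomorphic to $|K_P|$, and since $K_P$ is connected, $\*I_P$ is a connected poset whose vertex group is $G$. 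It follows that $\*I_P$ is simply-connected if and only if $G$ is trivial. Each step---forming the presentation complex, triangulating it, and passing to the face poset---is plainly effective, so $P \mapsto \*I_P$ is a computable reduction; a decision procedure for simple-connectedness of finite posets would thus decide triviality of $G$, which is impossible. The closing ``hence'' then follows from \cref{thm:pushout}: for a finite poset $\*I$, deciding whether every $\*I$-shaped diagram in a category with pushouts has a colimit is equivalent to deciding simple-connectedness of $\*I$, hence likewise undecidable.

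The one point demanding care is the classical input that every finitely presented group occurs as $\pi_1$ of a finite poset, via the face-poset/barycentric-subdivision correspondence; everything else is bookkeeping. The main obstacle is to carry this out so that the triangulation of the presentation complex is both effective and fundamental-group-preserving, and to apply the identification of $\pi_1(\*I_P)$ with the topological fundamental groupoid (\cref{rmk:nerve}) to the \emph{connected} complex $K_P$, so that the combinatorial condition ``$\pi_1(\*I_P)$ is an equivalence relation'' genuinely matches ``the vertex group $G$ is trivial''.
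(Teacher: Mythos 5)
Your proposal is correct and follows essentially the same route as the paper's (sketched) proof: reduce from undecidability of triviality of finitely presented groups by forming the presentation $2$-complex, triangulating, and passing to the face poset, whose nerve is the barycentric subdivision and hence has fundamental group(oid equivalent to) $G$. You merely spell out the details that the paper compresses into ``there is a standard procedure,'' including the worthwhile observation that for connected posets ``$\pi_1(\*I)$ is an equivalence relation'' coincides with triviality of the topological fundamental group.
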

\begin{proof}[Proof (sketch)]
There is a standard procedure to turn a finite presentation of a group $G$ into a finite $2$-dimensional (abstract) simplicial complex $K$ with fundamental group $G$; then the nerve (see \cref{rmk:nerve}) of the face poset of $K$ is the barycentric subdivision of $K$, hence the face poset of $K$ has fundamental group(oid equivalent to) $G$ (see \cite{B} for details).
\end{proof}

\bigskip
\noindent Department of Mathematics

\noindent California Institute of Technology

\noindent Pasadena, CA 91125

\medskip
\noindent\nolinkurl{rchen2@caltech.edu}

\end{document}